\numberwithin{equation}{section}%equation labels
\theoremstyle{thmstyleone}%
\newtheorem{thm}{Theorem}[section]
\newtheorem{lem}{Lemma}[section]
\newtheorem{assu}{Assumption}
\theoremstyle{thmstyletwo}%
\newtheorem{remark}{Remark}[section]
\theoremstyle{thmstylethree}%
\newtheorem{defi}{Definition}[section]
\theoremstyle{thmstylefour}%
\begin{document}

\title[DDRSM for weakly convex problems]{A distributed Douglas-Rachford splitting method for solving linear constrained multi-block weakly convex problems\footnotemark[0]}

\author[1]{\fnm{Leyu} \sur{Hu}}\email{huleyu@buaa.edu.cn}

\author[1]{\fnm{Jiaxin} \sur{Xie}}\email{xiejx@buaa.edu.cn}

\author[2]{\fnm{Xingju} \sur{Cai}} \email{caixingju@njnu.edu.cn}

\author*[1]{\fnm{Deren} \sur{Han}}\email{handr@buaa.edu.cn}

\affil[1]{\orgdiv{LMIB of the Ministry of Education}, \orgname{School of Mathematical Sciences, Beihang University}, \orgaddress{\city{Beijing}, \postcode{100191}, \country{China}}}

\affil[2]{\orgdiv{School of Mathematical Sciences}, \orgname{Nanjing Normal University}, \orgaddress{\city{Nanjing}, \postcode{210023}, \country{China}}}

\date{}
\abstract{
In recent years, a distributed Douglas-Rachford splitting method (DDRSM) has been proposed to tackle multi-block separable convex optimization problems. This algorithm offers relatively easier subproblems and greater efficiency for large-scale problems compared to various augmented-Lagrangian-based parallel algorithms. Building upon this, we explore the extension of DDRSM to weakly convex cases. By assuming weak convexity of the objective function and introducing an error bound assumption, we demonstrate the linear convergence rate of DDRSM. Some promising numerical experiments involving compressed sensing and robust alignment of structures across images (RASL) show that DDRSM has advantages over augmented-Lagrangian-based algorithms, even in weakly convex scenarios.
}

\keywords{multi-block problems, weakly convex, parallel algorithm, distributed Douglas-Rachford splitting method, error bound, linear convergence rate}
\msc{90C26, 90C30, 65K10, 94A08}

\maketitle

\renewcommand{\thefootnote}{}
\footnotetext[0]{The study was supported by the Ministry of Science and Technology of China (No. 2021YFA1003600), and by National Natural Science Foundation of China (12126608, 12126603, 12471290)}

\section{Introduction} \label{sec1}

We consider the following multi-block separable optimization problem with linear constraints. The objective function is the sum of $m$ individual functions but without coupled variables:
\begin{equation}\label{DRSO}
	\min_{x_1, x_2, \cdots, x_m} \left\{\sum_{i=1}^m f_i\left(x_i\right) \mid \sum_{i=1}^m A_i x_i=b, \quad x_i \in \mathcal{X}_i, \quad i=1,2, \cdots, m\right\},
\end{equation}
where $f_i:\mathbb{R}^{n_i} \rightarrow \mathbb{R}\cup\{+\infty\},\; i=1,2,\dots,m$ 
are weakly convex functions;
$A_i\in\mathbb{R}^{l\times n_i}$ are given matrices; $b\in \mathbb{R}^l$ is a given vector; $\mathcal{X}_i\subseteq \mathbb{R}^{n_i}$ are closed nonempty convex sets. 
We further assume that the solution set of \eqref{DRSO} is nonempty, and there exists a solution point satisfying the KKT conditions.
We use notations $A:=(A_1, A_2, \dots, A_m)\in \mathbb{R}^{l\times n}$, $x:=(x_1^\top,x_2^\top,\dots,x_m^\top)^\top\in\mathbb{R}^n$, $\mathcal{X}:=\mathcal{X}_1 \times \mathcal{X}_2 \times \cdots \times \mathcal{X}_m\subseteq \mathbb{R}^n$, and $f(x):=\sum_{i=1}^m f_i\left(x_i\right)$, where $n=\sum_{i=1}^{m} n_i$.

The form above is a unified framework for various models. By using suitable smoothing techniques, several nonconvex norm regularization problems can be expressed  as \eqref{DRSO}. Thus, the structure encompasses a wide range of applications such as compressed sensing \cite{zhao2016nonconvex,wei2016nonlocal}, image restoration \cite{zhang2017nonconvex,chen2013bregman}, image retinex \cite{xu2020star,liu2017novel,wang2020image,liu2018retinex,hu2020parallel}, sparse optimization \cite{zeng2014l_}, and so on.

Optimization problems like \eqref{DRSO} often involve high dimensions in practical applications.
To exploit the inherent multi-block structure and solve it in parallel, it is necessary to decompose the problem into multiple subproblems for more efficient computation.
Researchers have extensively investigated algorithms for the convex case of problem \eqref{DRSO}, which involves closed convex functions $f_i$. Among these algorithms, augmented-Lagrangian-based splitting methods stand out as a significant class.
The iterative formulation of the augmented Lagrangian method (ALM) \cite{hestenes1969multiplier} is as follows:
\begin{equation}
	\left\{\begin{array}{l}
		\left(x_1^{k+1}, \cdots, x_m^{k+1}\right)=\underset{x \in \mathcal{X}}{\operatorname{argmin} }\;\mathcal{L}_\beta(x_1,\cdots,x_m,\lambda^k),\\
		\lambda^{k+1}=\lambda^k-\beta\left(\sum_{i=1}^m A_i x_i^{k+1}-b\right),
	\end{array}\right.\label{eq:ALM}
\end{equation}
where $\beta>0$ is the penalty parameter, and the augmented Lagrangian function is defined as
\begin{equation}\label{eq:lagrangian}
	\mathcal{L}_\beta(x_1,\cdots,x_m,\lambda):=\sum_{i=1}^m f_i\left(x_i\right)-\left\langle\lambda, \sum_{i=1}^m A_i x_i-b\right\rangle+\frac{\beta}{2}\left\|\sum_{i=1}^m A_i x_i-b\right\|^2.
\end{equation}

The subproblem in ALM \eqref{eq:ALM} can be challenging to solve because the variables $x_i, i=1,\dots,m$ are coupled through the quadratic term $\frac{\beta}{2}\left\|\sum_{i=1}^m A_i x_i-b\right\|^2$.
To exploit the multi-block structure of problem \eqref{DRSO}, we adopt the alternating direction method of multipliers (ADMM) as a variant of ALM. The subproblems of multi-block ADMM are solved in a Gauss-Seidel fashion.
For convex cases, when $m\geq 3$, Chen \emph{et al.} \cite{chen2016direct} show that ADMM does not necessarily converge without additional assumptions or modifications.
However, for $m=3$, with at least one block being strongly convex, Cai \emph{et al.} \cite{cai2017convergence} proved its convergence.
For nonconvex cases, the convergence of the two-block ADMM has been established in \cite{wang2014convergence, li2015global, yang2017alternating} under the Kurdyka-\L ojasiewicz (KL) assumption.
Extending these results to the multi-block case, Guo \emph{et al.} \cite{guo2017convergence} demonstrated the convergence of multi-block ADMM for nonconvex problems.
Some of these studies also established convergence rates based on the KL property.
Additionally, Jia \emph{et al.} \cite{jia2021local} established a local linear convergence rate of ADMM for nonconvex problems by leveraging an error bound condition.
This error bound condition is formulated as follows: for $w = (x^\top,\lambda^\top)^\top$ and a solution set $W^*$,
\begin{equation*}	
	\operatorname{dist}(w,W^*) \leq \tau \|E_\beta(x,\lambda)\|,
\end{equation*}
where $E_\beta$ represents an error function.
In our work, we introduce this error bound assumption, and discuss its justification and applicability in Section \ref{sec:converAnalysis}. Further foundational details on this error bound condition can be found in the works of Luo and Tseng \cite{luo1992linear, luo1993error}.

To further improve the efficiency of solving subproblems in \eqref{eq:ALM}, an alternative approach is to decompose them into $m$ simpler and smaller subproblems using Jacobian decomposition techniques for the quadratic term.
This Jacobian-type approach allows the subproblems to be solved simultaneously.
However, to guarantee the convergence of the algorithm, it requires an additional correction step or modifications to the subproblems, as demonstrated in previous works \cite{han2014proximal, han2014augmented, he2009proximal, he2015full, deng2017parallel, jiang2022efficient}.
Convergence analyses of Jacobian-type splitting methods for nonconvex problems have been conducted under various assumptions.  For instance, Chatzipanagiotis and Zavlanos \cite{chatzipanagiotis2017convergence} established convergence by relying primarily on strong second-order sufficient conditions at the solution point $(x^*,\lambda^*)$, whereas Yashtini \cite{yashtini2021multi} relied on KL assumptions.

While the aforementioned ALM-based algorithms leverage the advantageous separable structure of multi-block problems, they still have areas that can be improved. 
Firstly, solving subproblems can be challenging due to constraints such as $x_i \in \mathcal{X}_i$ and the inclusion of square terms with $Ax_i$. 
These factors often lead to subproblems that do not have closed-form solutions. 
Secondly, reducing the number of correction variables in algorithms that involve correction steps may lead to more favorable numerical results, as observed by \cite{tao2011recovering,he2012alternating}.

In addition to ALM-based algorithms, operator splitting methods are commonly employed for solving certain special cases of problem \eqref{DRSO}. Specifically, when we focus on problems where $m = 2$, consider only the linear constraints (without the abstract constraints $x_i \in \mathcal{X}_i$), and particularly when these linear constraints are restricted to $x_1 = x_2$, many operator splitting algorithms can be effectively applied. In such cases, we can reformulate the first-order optimality conditions of the two objective functions into a generalized inclusion problem involving their gradient operators, as follows:
\begin{equation}\label{eq:DR-2block}
	0 \in \partial f_1 + \partial f_2.	
\end{equation}
For problems of this form, methods like the Douglas-Rachford splitting method (DRSM) are commonly used.
In this context, Li and Pong \cite{li2016douglas} demonstrated convergence for nonconvex problems under the assumption that one of the objective function gradients is Lipschitz continuous, leveraging the KL property. 
Guo \emph{et al.} \cite{guo2017convergenceDRSM, guo2018note} provided convergence proofs for cases where the objective functions are strongly convex and weakly convex. 
Building upon this, Li and Wu \cite{li2019convergence} extended the results in Li and Pong \cite{li2016douglas} to Peaceman-Rachford splitting method (PRSM), and Themelis and Patrinos \cite{themelis2020douglas} established tight convergence results for DRSM and PRSM and extended this scheme for ADMM.

Addressing the limitations of the classic Douglas-Rachford (DR) approach in the context of problems with $m \geq 3$, where \eqref{eq:DR-2block} becomes a multi-block inclusion problem:
\begin{equation}\label{eq:DR-multiblock}
0\in \partial f_1 + \partial f_2 + \cdots + \partial f_m,
\end{equation}
and considering general constraints, the classic DRSM becomes inapplicable. 

To tackle issues of ALM-based algorithms and the limitations of the classic DRSM, He and Han \cite{he2016distributed} reformulated the KKT system of problem \eqref{DRSO} into a generalized inclusion problem with two operator blocks:
\begin{equation}\label{eq:inclusion}
0 \in F(u) + \mathcal{N}_{\Omega}(u),
\end{equation}
where $u$ is a combined vector of variables and multipliers,
$F(u)$ represents the gradients of the Lagrangian functions with respect to the variables and multipliers,
and $\mathcal{N}_{\Omega}(u)$ is the normal cone to the constraint set. 
Instead of adopting \eqref{eq:DR-multiblock},
He and Han applied DRSM to this inclusion problem \eqref{eq:inclusion}, thereby proposing the Distributed Douglas-Rachford splitting method (DDRSM). 
The specific correspondence between this problem and problem \eqref{DRSO} will be detailed in Section \ref{sec:equivalent}. 
This innovative approach facilitates the use of DRSM to solve problems with $m \geq 3$ and general linear constraints. 
The DDRSM scheme can be succinctly described as follows:
\begin{equation}\label{eq:DDRSM scheme}
	\left\{\begin{array}{l}\text{Generate a predictor } \bar{\lambda}^k \text{ with given }\left(x_1^k,\cdots,x_m^k, \lambda^k\right), \\ x_i^{k+1}=\underset{x_i \in \mathbb{R}^{n_i}}{\operatorname{argmin} }\left\{f_i\left(x_i\right)+\frac{1}{2 \beta}\left\|x_i-\left(x_i^k+\beta \xi_i^k-\rho \alpha_k e_{x_i}^k\left(x_i^k, \bar{\lambda}^k\right)\right)\right\|^2\right\},\;i=1, \cdots, m, \\ \text{Update } \lambda^{k+1} \text{ in an appropriate scheme.} \end{array}\right.
\end{equation}
Here, $\xi_i^k \in \partial f_i(x_i^k)$ denotes a subgradient of the function $f_i$ at $x_i^k$, and $e_{x_i}^k(x_i^k, \bar{\lambda}^k)$ refers to the error functions defined later in \eqref{eq:Ebeta}. Compared to previous ALM-based algorithms, DDRSM offers several advantages: firstly, it provides easily solvable subproblems that can be addressed simultaneously, thereby reducing computational complexity and making it suitable for large-scale problems; secondly, it introduces an additional update of $\lambda$ to ensure convergence.

However, applying DRSM to the KKT equations of nonconvex problems remains challenging for convergence analysis due to the difficulty of constructing a measure function that strictly decreases throughout the algorithm.
This makes it hard to analyze the convergence by leveraging the KL property, since the sufficient decrease of the measure function is not guaranteed.
To address this issue, we introduce a local error bound assumption in this article to prove the convergence of DDRSM for weakly convex problems.
We have modified the parameters of DDRSM to tackle weakly convex problems and also prove a linear convergence rate under these assumptions. 
Through extensive numerical experiments on compressed sensing and robust alignment via sparse and low-rank decomposition, we provide compelling evidence of the effectiveness and versatility of our algorithm.

The rest of this paper is organized as follows:
Section 2 introduces the fundamental concepts, notations, and relevant properties that will be used throughout the paper.
Section 3 introduces an adapted version of DDRSM for weakly convex problems. We concretely specify the selection of parameters.
The linear convergence of the discussed algorithm is proven in Section 4 under the assumption of an error bound condition and weak convexity.
Section 5 demonstrates the effectiveness of our method through numerical experiments on the Lasso model for compressed sensing of signal and the tensor robust principal component analysis model for image alignment.
The paper concludes with some final remarks in Section 6.

\section{Preliminaries}

In this section, we introduce some preliminary concepts and significant results that will be utilized throughout the subsequent parts of our article. For a matrix $A$ and a vector $x$, we use $A^\top$ and $x^\top$ to represent their transposes. For any two vectors $x, y \in \mathbb{R}^n$, $\left\langle x, y \right\rangle$ denotes their standard inner product. We adopt the notation \( \|\cdot\| \) to denote the Euclidean norm for vectors and the 2-norm (spectral norm) for matrices, depending on the context. The $\ell_p$ norm of a vector $x$ is defined as $\|x\|_p := \left(\sum_{i=1}^n |x_i|^p\right)^{1/p}$. The term $\ell_p$ norm is applicable when $p \geq 1$. For values in the $0 < p < 1$ range, the $\ell_p$ norm is identified as a nonconvex quasi-norm. For a matrix $X \in \mathbb{R}^{m \times n}$, the nuclear norm $\|X\|_*$ is defined as the sum of its singular values, which can be expressed as $\|X\|_* = \sum_{i=1}^{\min(m,n)} \sigma_i$, where $\sigma_i$ represents the $i$-th singular value of $X$.

\begin{defi}
	Let $ S $ be a subset of $ \mathbb{R}^n $.
	
	The \emph{affine hull} of $ S $, denoted $ \operatorname{aff}(S) $, is the smallest affine subspace containing $ S $. It consists of all affine combinations of points in $ S $ and is defined as:
	\[
	\operatorname{aff}(S) = \left\{ \sum_{i=1}^{k} \alpha_i x_i \,\bigg|\, x_i \in S,\ \sum_{i=1}^{k} \alpha_i = 1,\ \alpha_i \in \mathbb{R},\ k \in \mathbb{N} \right\}.
	\]
	
	The \emph{relative interior} of $ S $, denoted $ \operatorname{ri}(S) $, is the interior of $ S $ relative to its affine hull. It is defined as:
	\[
	\operatorname{ri}(S) = \left\{ x \in S \,\bigg|\, \exists\, \epsilon > 0 \text{ such that } B(x, \epsilon) \cap \operatorname{aff}(S) \subseteq S \right\},
	\]
	where $ B(x, \epsilon) $ denotes the open ball of radius $ \epsilon $ centered at $ x $.
	
	\end{defi}

For a function $f: \mathbb{R}^n \rightarrow \mathbb{R} \cup \{+\infty\}$, $\operatorname{dom} f$ is the \textit{effective domain} of $f$, which is defined as $\operatorname{dom} f = \{ x \in \mathbb{R}^n \mid f(x) < +\infty \}$. We use $\nabla f$ to denote its gradient and $\partial f$ to denote its \textit{subdifferential}, which is defined as follows.

\begin{defi}
	Let $f: \mathbb{R}^n \rightarrow \mathbb{R} \cup \{+\infty\}$ be a lower semicontinuous function.
	\begin{enumerate}[(i)]
		\item The \textit{Fr\'echet subdifferential}, or \textit{regular subdifferential}, of $f$ at $x \in \mathbb{R}^n$, denoted as $\hat{\partial} f(x)$, is the set of vectors $x^* \in \mathbb{R}^n$ that satisfy the following condition:
		\[		
		\liminf _{y \neq x,\, y \rightarrow x} \frac{f(y)-f(x)-\left\langle x^*, y-x\right\rangle}{\|y-x\|} \geqslant 0.		
		\]
		
		\item The \textit{limiting subdifferential}, or simply the subdifferential, of $f$ at $x \in \mathbb{R}^n$, written as $\partial f(x)$, is defined as		
		$$		
		\partial f(x):=\left\{ x^* \in \mathbb{R}^n \mid \exists\, x_n \rightarrow x,\ f\left(x_n\right) \rightarrow f(x),\ x_n^* \in \hat{\partial} f\left(x_n\right),\ x_n^* \rightarrow x^* \right\}.		
		$$		
	\end{enumerate}	
\end{defi}

We introduce two important operators that will be frequently used in our analysis. First, the \textit{proximal operator}, denoted as $\operatorname{prox}_{\beta f}(x)$, is defined for a function $f: \mathbb{R}^n \rightarrow \mathbb{R} \cup \{+\infty\}$ as follows:
\begin{equation}\label{eq:proximal}
	\operatorname{prox}_{\beta f}(x) = \underset{y \in \mathbb{R}^n}{\operatorname{argmin}} \left( f(y) + \frac{1}{2 \beta}\|y - x\|^2 \right),
\end{equation}
where $\beta$ is a positive parameter. We say that a function $f$ is \textit{prox-bounded} if $f + \frac{1}{2\beta}\|\cdot\|^2$ is lower bounded for some $\beta > 0$. The function $M_{\beta} f(x)$, defined as
\[
M_{\beta} f(x) = \underset{y \in \mathbb{R}^n}{\inf} \left( f(y) + \frac{1}{2 \beta}\|y - x\|^2 \right),
\]
is called the \textit{Moreau envelope} of $f$.

Second, the \textit{projection operator} onto a closed convex set $C \subseteq \mathbb{R}^n$, denoted as $P_C(x)$, is defined as follows:
\[
P_C(x) = \underset{y \in C}{\operatorname{argmin}} \|y - x\|.
\]
For such $C$, we denote $\mathcal{N}_{C}(u)$ as the \textit{normal cone} of $C$ at $u$, which is defined as
\[
\mathcal{N}_{C}(u) = \left\{ w \in \mathbb{R}^n \mid \left\langle w, v - u \right\rangle \leq 0,\ \forall v \in C \right\}.
\]

Next, we define the \textit{Lipschitz continuity} of the gradient as follows.

\begin{defi}
	A function $f: \mathbb{R}^n \rightarrow \mathbb{R}$ is said to have a Lipschitz continuous gradient if there exists a constant $L > 0$ such that for any $x, y \in \mathbb{R}^n$, the following inequality holds:
	\[
	\|\nabla f(x) - \nabla f(y)\| \leq L \|x - y\|.	
	\]
\end{defi}

Next, we define \textit{weakly convex} functions as follows.

\begin{defi}
	A function $f:\mathcal{X} \subseteq \mathbb{R}^n \rightarrow \mathbb{R}$ is called $c$-weakly convex if there exists a constant $c \geq 0$ such that for all $x, y \in \mathcal{X}$ and any $\xi \in \partial f(x)$, the following inequality holds:
	\[
	f(y) \geq f(x) + \langle \xi, y - x \rangle - \frac{c}{2} \|y - x\|^2.
	\]
\end{defi}

If a function $f$ is $c$-weakly convex, then, similar to strongly convex functions, it can be shown that its subgradient operator $\partial f$ is \textit{$c$-weakly monotone}. This property is defined similarly to strong monotonicity but with a negative constant $c$ in the inequality. The relationship between strong convexity and strong monotonicity is well-known in the literature (see, e.g., \cite[Exercise~12.59]{rockafellar2009variational}).

\begin{lem}
	If the function $f:\mathcal{X} \subseteq \mathbb{R}^n \rightarrow \mathbb{R}$ is $c$-weakly convex for some $c \geq 0$, then its subgradient operator $\partial f$ is $c$-weakly monotone. This means that for all $x, y \in \mathcal{X}$ and $\xi \in \partial f(x),\ \zeta \in \partial f(y)$,
	\begin{equation}\label{eq:weakconvex}
		\left\langle x - y,\, \xi - \zeta \right\rangle \geq -c \|x - y\|^2.
	\end{equation}
\end{lem}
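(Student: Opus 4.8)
The plan is to argue directly from the defining inequality of $c$-weak convexity by writing it down at the pair $(x,y)$ and again at the pair $(y,x)$, and then summing the two. Concretely, since $\xi \in \partial f(x)$ and $f$ is $c$-weakly convex, applying the definition with the roles ``base point $x$, test point $y$'' gives
\[
f(y) \geq f(x) + \langle \xi,\, y - x\rangle - \frac{c}{2}\|y-x\|^2,
\]
and since $\zeta \in \partial f(y)$, applying it with the roles ``base point $y$, test point $x$'' gives
\[
f(x) \geq f(y) + \langle \zeta,\, x - y\rangle - \frac{c}{2}\|x-y\|^2.
\]

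Next I would add these two inequalities. The $f(x)$ and $f(y)$ terms cancel on both sides, leaving
\[
0 \geq \langle \xi,\, y-x\rangle + \langle \zeta,\, x - y\rangle - c\|x-y\|^2 = -\langle \xi - \zeta,\, x - y\rangle - c\|x-y\|^2,
\]
where in the last step I used $\langle \xi, y-x\rangle + \langle \zeta, x-y\rangle = -\langle \xi-\zeta, x-y\rangle$. Rearranging yields exactly $\langle x-y,\,\xi-\zeta\rangle \geq -c\|x-y\|^2$, which is \eqref{eq:weakconvex}.

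There is really no substantive obstacle here: the only things to be careful about are (i) that the $c$-weak convexity definition is quantified over \emph{all} $\xi \in \partial f(x)$, so it legitimately applies to the particular subgradients $\xi$ and $\zeta$ we are handed, and (ii) that both $x$ and $y$ lie in $\mathcal{X}$, which is assumed, so that both instances of the inequality are valid. An alternative route, worth mentioning as a remark, is to observe that $g := f + \frac{c}{2}\|\cdot\|^2$ is convex on $\mathcal{X}$ with $\partial g = \partial f + c\,\mathrm{Id}$, so that $\partial g$ is monotone and the claimed inequality is just the monotonicity of $\partial g$ rewritten in terms of $\partial f$; this mirrors the strongly convex/strongly monotone correspondence cited from \cite[Exercise~12.59]{rockafellar2009variational}. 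I would present the direct two-inequality argument as the main proof since it is self-contained.
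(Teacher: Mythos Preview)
Your proof is correct. The direct two-inequality argument from the definition of $c$-weak convexity is clean and self-contained, and the bookkeeping is right.

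It is worth noting that the paper does not argue this way: instead it takes precisely the route you mention as an alternative remark. The paper observes that $f + \frac{c}{2}\|\cdot\|^2$ is convex and then invokes \cite[Theorem~12.17]{rockafellar2009variational} to conclude that $\partial\bigl(f + \frac{c}{2}\|\cdot\|^2\bigr)$ is monotone, from which \eqref{eq:weakconvex} follows. So your ``main'' and ``alternative'' are swapped relative to the paper. Your direct argument has the advantage of being entirely self-contained and of making transparent exactly where the constant $c$ (rather than $c/2$) comes from; the paper's version is terser but outsources the key step to an external reference and implicitly uses the subdifferential sum rule $\partial\bigl(f + \frac{c}{2}\|\cdot\|^2\bigr) = \partial f + c\,\mathrm{Id}$, which in the weakly convex setting is itself a fact one might want to justify. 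Either route is fine here.
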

\begin{proof}
	Since $f$ is weakly convex, $f + \frac{c}{2}\|\cdot\|^2$ is convex. Then, by \cite[Theorem~12.17]{rockafellar2009variational}, we have that $\partial \left( f + \frac{c}{2}\|\cdot\|^2 \right)$ is monotone, which implies \eqref{eq:weakconvex}.
\end{proof}

\subsection{Equivalent formulations of problem \eqref{DRSO} and introduction of DDRSM}
\label{sec:equivalent}

In this subsection, we introduce several equivalent formulations of problem \eqref{DRSO}. This exploration serves to establish a clear connection between the Douglas-Rachford splitting method (DRSM) and problem \eqref{DRSO}, facilitating the introduction of the DDRSM scheme \eqref{eq:DDRSM scheme}. Additionally, we present a lemma that characterizes the equivalence between the solution set and the zero points of the error functions $e_{x_i}^k(x_i^k, \bar{\lambda}^k),\ i = 1, \dots, m$ in \eqref{eq:DDRSM scheme}. This lemma will be crucial in the subsequent proofs.

First, we consider the optimality conditions of problem \eqref{DRSO} under certain constraint qualifications. For example, the Slater's condition for \eqref{DRSO} is
\begin{equation}\label{eq:Slater}
	\exists x^\prime \in \operatorname{ri}\left(\operatorname{dom}f\right)\cap\left\{x \in \mathcal{X} \mid Ax = b\right\},
\end{equation}
with $f$, $A$, $b$, and $\mathcal{X}$ as defined in Section~\ref{sec1}.
By \cite[Theorem~5.1.6]{makela1992nonsmooth}, we know that if certain constraint qualifications, such as Slater's condition, hold, then there exist $x^* \in \mathcal{X},\ \lambda^* \in \mathbb{R}^l$ such that
\begin{equation}\label{eq:KKT}
	\left\{\begin{array}{l}
		0 \in \partial f_i(x_i^*) + A_i^\top \lambda^*, \quad x_i^* \in \mathcal{X}_i,\ i = 1, 2, \dots, m,\\
		\sum_{i=1}^m A_i x_i^* = b.
	\end{array}\right.
\end{equation}
Here, $(x^*, \lambda^*)$ is called a KKT point of problem \eqref{DRSO}. We can then derive an equivalent generalized inclusion form of \eqref{eq:KKT} as
\begin{equation}\label{DRSO3}
    0 \in F(u) + \mathcal{N}_{\Omega}(u),
\end{equation}
where
\[
F(u) = \left(\begin{array}{c}
    \xi - A^{\top} \lambda \\
    A x - b
\end{array}\right), \quad u = \left(\begin{array}{c} x \\ \lambda \end{array}\right), \quad \xi \in \partial f(x),
\]
and $\mathcal{N}_{\Omega}(u)$ is the normal cone of $\Omega := \mathcal{X} \times \mathbb{R}^l$.
By the definition of the normal cone, we can rewrite \eqref{eq:KKT} and \eqref{DRSO3} as the following variational inequality:  
find a vector $u^* = \left( \begin{array}{c} x^* \\ \lambda^* \end{array} \right)$, where $x^* \in \mathcal{X}$, $\lambda^* \in \mathbb{R}^l$, and $\xi^* \in \partial f(x^*)$, such that
\begin{equation}\label{DRSOVI}
	\left\{ \begin{array}{l}
		\left\langle x - x^*,\, \xi^* - A^\top \lambda^* \right\rangle \geq 0, \quad \forall x \in \mathcal{X}, \\[1ex]
		\left\langle \lambda - \lambda^*,\, A x^* - b \right\rangle \geq 0, \quad \forall \lambda \in \mathbb{R}^l.
	\end{array} \right.
\end{equation}
As long as the solution set of \eqref{DRSO} is nonempty and a KKT point exists, any solution of \eqref{DRSO} is also a solution of \eqref{DRSO3} or \eqref{DRSOVI}.

For the generalized inclusion problem \eqref{DRSO3}, He and Han \cite{he2016distributed} applied a form of DRSM to it with extrapolation:
\begin{equation}\label{eq:DRSM}
    u^{k+1} + \beta F(u^{k+1}) = u^k + \beta F(u^k) - \rho E_\beta(u^k),
\end{equation}
where $\beta > 0$, and $E_\beta(u^{k})$ is referred to as the \textit{natural map} \cite[p.~83]{facchinei2003finite}, which serves as an error function for the variational inequality \eqref{DRSOVI}. The natural map is defined as
\begin{equation}\label{eq:Ebeta}
	\begin{aligned}
		E_\beta(u) &= E_\beta(x, \lambda) = u - P_{\Omega}(u - \beta F(u)) \\
		&= \left( \begin{array}{c} e_{x}(x, \lambda) \\ e_\lambda(x) \end{array} \right) = \left( \begin{array}{c}
			x - P_{\mathcal{X}}\left( x - \beta ( \xi - A^{\top} \lambda ) \right) \\[1ex]
			\beta (A x - b)
		\end{array} \right).
	\end{aligned}
\end{equation}
According to \cite[Proposition~1.5.8]{facchinei2003finite} and \cite[Lemma~2.1]{he2016distributed}, we know that $u^* = \left( \begin{array}{c} x^* \\ \lambda^* \end{array} \right)$ is a solution of \eqref{DRSOVI} if and only if $E_\beta(x^*, \lambda^*) = 0$.

Based on \eqref{eq:DRSM}, the introduction of a \textit{prediction-correction} scheme, as proposed by Chen \cite{chen1994proximal}, leads to a modification of the natural map, resulting in the error function of DDRSM.
We introduce a predictor for $\lambda$ as
\begin{equation}\label{eq:defbarla}
	\bar{\lambda} = \lambda - e_\lambda(x).
\end{equation}
Since $\bar{\lambda}^*=\lambda^*-e_\lambda(x^*)=\lambda^*$ when $x^*$ is the solution, the following lemma shows that $E_\beta(x,\bar{\lambda})$ has the same zero points as $E_\beta(x,\lambda)$.
\begin{lem}
	The vector $\left(\begin{array}{c}x^*\\ \lambda^*\end{array}\right) \in \Omega$ is a solution of VI \eqref{DRSOVI} if and only if $\left\|E_\beta(x^*,\bar{\lambda}^*)\right\|= 0$ for any $\beta>0$, where $E_\beta$ is defined in \eqref{eq:Ebeta} and $\bar{\lambda}^*=\lambda^*-e_\lambda(x^*)$. \label{lem:naturalmap}
\end{lem}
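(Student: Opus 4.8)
The plan is to deduce this from the equivalence recorded just before the lemma, namely that $u^*=(x^*,\lambda^*)$ solves the variational inequality \eqref{DRSOVI} if and only if $E_\beta(x^*,\lambda^*)=0$ (by \cite[Proposition~1.5.8]{facchinei2003finite} and \cite[Lemma~2.1]{he2016distributed}), and to observe that at a solution the predictor $\bar\lambda^*$ coincides with $\lambda^*$, so that $E_\beta(x^*,\bar\lambda^*)$ and $E_\beta(x^*,\lambda^*)$ are literally the same object. The structural fact making this work is that the $\lambda$-block of the natural map, $e_\lambda(x)=\beta(Ax-b)$, depends on $x$ alone; hence, reading off \eqref{eq:Ebeta},
\[
E_\beta(x^*,\bar\lambda^*)=\left(\begin{array}{c} x^*-P_{\mathcal{X}}\bigl(x^*-\beta(\xi^*-A^\top\bar\lambda^*)\bigr)\\[1ex] \beta(Ax^*-b)\end{array}\right)=\left(\begin{array}{c} e_x(x^*,\bar\lambda^*)\\[1ex] e_\lambda(x^*)\end{array}\right),
\]
whose second component is exactly the quantity subtracted in $\bar\lambda^*=\lambda^*-e_\lambda(x^*)$.

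The key step common to both directions is: whenever the $\lambda$-block of $E_\beta$ vanishes, we get $Ax^*=b$, i.e.\ $e_\lambda(x^*)=0$, and therefore $\bar\lambda^*=\lambda^*-e_\lambda(x^*)=\lambda^*$. For the forward implication, if $u^*$ solves \eqref{DRSOVI} then $E_\beta(x^*,\lambda^*)=0$ by the cited results, so $e_\lambda(x^*)=0$, hence $\bar\lambda^*=\lambda^*$ and $E_\beta(x^*,\bar\lambda^*)=E_\beta(x^*,\lambda^*)=0$, giving $\|E_\beta(x^*,\bar\lambda^*)\|=0$. For the converse, if $\|E_\beta(x^*,\bar\lambda^*)\|=0$ then the second block gives $e_\lambda(x^*)=0$, hence $\bar\lambda^*=\lambda^*$; substituting this identity into the first block yields $e_x(x^*,\lambda^*)=0$, so $E_\beta(x^*,\lambda^*)=0$, and the cited equivalence gives that $u^*$ solves \eqref{DRSOVI}. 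Since the reference equivalence holds for every $\beta>0$ and the reduction $\bar\lambda^*=\lambda^*$ only uses the $\beta$-independent relation $Ax^*=b$, the ``for any $\beta>0$'' quantifier in the statement follows at once.

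I do not anticipate a genuine obstacle — the proof is essentially bookkeeping. The only point deserving a sentence of care is the superficially circular appearance of the definition $\bar\lambda^*=\lambda^*-e_\lambda(x^*)$: this is harmless because $e_\lambda$ depends on the primal part of $u$ only, so requiring the $\lambda$-block of $E_\beta$ to vanish is an honest condition on $x^*$ that makes the correction term disappear and leaves $\bar\lambda^*=\lambda^*$.
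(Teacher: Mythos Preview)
Your proposal is correct and follows essentially the same approach as the paper: both directions hinge on the observation that the $\lambda$-block $e_\lambda(x^*)$ of $E_\beta$ depends only on $x^*$, so its vanishing forces $\bar\lambda^*=\lambda^*$ and reduces the claim to the cited equivalence $E_\beta(x^*,\lambda^*)=0$. The paper's proof is terser but structurally identical.
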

\begin{proof}
	According to \cite[Lemma~2.1]{he2016distributed}, $\left(\begin{array}{c}x^*\\ \lambda^*\end{array}\right) \in \Omega$ is a solution of VI \eqref{DRSOVI} if and only if ${\|E_\beta(x^*,\lambda^*)\|=0}$.
	So when $\left(\begin{array}{c}x^*\\ \lambda^*\end{array}\right) \in \Omega$ is a solution, we have $e_\lambda(x^*)=0$. Therefore, $\bar{\lambda}^*=\lambda^*$ due to \eqref{eq:defbarla}, and we have $\|E_\beta(x^*,\bar{\lambda}^*)\| = \|E_\beta(x^*,\lambda^*)\| = 0$.
	
	On the other hand, if $\|E_\beta(x^*,\bar{\lambda}^*)\|=0$, we also have $e_\lambda(x^*)=0$, since it is a component of $E$. 
	Then, $\|E_\beta(x^*,\lambda^*)\| = 0$, which concludes the proof.
\end{proof}

\section{Distributed Douglas-Rachford splitting method for weakly convex problems}

In this section, we elaborate on the framework \eqref{DRSO}, propose a distributed Douglas-Rachford splitting method for solving weakly convex problems, and present two compact formulations of the algorithm. 
Before presenting the algorithm, we introduce some simple assumptions to ensure that the algorithm is well-defined.

\begin{assu}\label{assu:f}
	The objective functions $ f_i $ are weakly convex functions and the proximal operator of each function $f_i$ defined in \eqref{eq:proximal}	is computationally efficient to evaluate.
\end{assu}
\begin{remark}\label{remark:f-prox}
	It is easy to verify that $f_i$ satisfying Assumption \ref{assu:f} is prox-bounded, and there exists $\beta>0$ such that $f_i + \frac{1}{2\beta}\|\cdot\|^2$ is strongly convex. Thus, $\operatorname{prox}_{\beta f_i}(\cdot)$ is single-valued.
\end{remark}
\begin{assu}\label{assu:sets}
	The abstract constraints $\mathcal{X}_i$ are nonempty closed convex sets.
\end{assu}
\begin{assu}\label{assu:KKT}
	Problem \eqref{DRSO} has a nonempty solution set, and there exists a KKT point satisfying \eqref{eq:KKT}.
\end{assu}

With the error function \eqref{eq:smalle}, DDRSM for solving optimization problem \eqref{DRSO} is presented as follows:

\begin{algorithm}[H]
	\caption{Distributed Douglas-Rachford splitting method for solving \eqref{DRSO}.}\label{Algo}
	\KwIn{Parameters $\rho\in(0,2)$, $\beta\in[\beta_{\min},\beta_{\max}]$, the initial points $\left(\begin{array}{c}x^0\\ \lambda^0\end{array}\right)\in\mathcal{X}\times\mathbb{R}^l$ and $\xi_i^0\in \partial f_i(x_i^0)$ for all $i=1,2,\dots,m$.}
	\For{$k=0,1,2,\cdots,$}
	{
	Compute $(\bar{e}_x^{k},e_\lambda^{k})$ by \eqref{eq:smalle}\;\label{eAlg}
	Update the step size $ \alpha_k $ through \eqref{eq:alpha}\;
	\For(\tcc*[f]{Possibly in parallel}){$i=1,2,\dots,m$}
	{$x_i^{k+1}=\operatorname{prox}_{\beta f_i}(x_i^k+\beta\xi_i^{k}-\rho\alpha_k\bar{e}_{x_i}^k)$\;
	$\xi_i^{k+1}=\xi_i^k+\frac{1}{\beta}(x_i^k-x_i^{k+1}-\rho\alpha_k\bar{e}_{x_i}^k)$\;
	}
	$\lambda^{k+1}=\lambda^k-\rho\alpha_k(e_\lambda^{k}-\beta A\bar{e}_x^{k})$\;
	}
\end{algorithm}

\begin{remark}
	The error functions in Algorithm \ref{Algo} are evaluated by substituting $ x^k $, $ \lambda^k $, and $ \bar{\lambda}^k $ into equations \eqref{eq:Ebeta}:
	\begin{equation}\label{eq:smalle}
		\begin{aligned}
			e_x^k &:= e_x(x^k, \lambda^k) = x^k - P_{\mathcal{X}} \left( x^k - \beta \left( \xi^k - A^{\top} \lambda^k \right) \right), \\
			e_\lambda^k &:= e_\lambda(x^k) = \beta \left( A x^k - b \right), \\
			\bar{e}_x^k &:= e_x(x^k, \bar{\lambda}^k) = x^k - P_{\mathcal{X}} \left( x^k - \beta \left( \xi^k - A^{\top} \bar{\lambda}^k \right) \right).
		\end{aligned}
	\end{equation}
	We denote $e_{x_i}^k$ and $\bar{e}_{x_i}^k$ as the $i$-th block of $e_x^k$ and $\bar{e}_x^k$, respectively.
	
	The calculation of $\alpha_k$ is determined as follows:
	\begin{equation}\label{eq:alpha}
		\begin{aligned}
			\varphi_{k} &=\left\|\bar{e}_{x}^{k}\right\|^{2}+\left\|e_{\lambda}^{k}\right\|^{2}-\beta \left\langle e_{\lambda}^{k}, A \bar{e}_{x}^{k}\right\rangle, \\
			\psi_{k} &=\left\|\bar{e}_{x}^{k}\right\|^{2}+\left\|e_{\lambda}^{k}-\beta A \bar{e}_{x}^{k}\right\|^{2}, \\
			\alpha_{k} &= \frac{\varphi_{k}}{\psi_{k}}.
		\end{aligned}
	\end{equation}
	It is important to note that if at least one of the error terms $ \bar{e}_{x}^k $ or $ e_{\lambda}^k $ is nonzero($\bar{e}_{x}^k\neq 0$ or $e_{\lambda}^k\neq 0$), then $ \psi_k $ is positive ($ \psi_k > 0 $). This ensures that $ \alpha_k $, which depends on $ \psi_k $, is well-defined. Moreover, by choosing the parameters properly (refer to Lemma \ref{lem:alpha}), we also have $\varphi_k > 0$; thus, the step size $\alpha_k$ is positive. If both $\bar{e}_{x}^k = 0$ and $e_{\lambda}^k = 0$, then we have already obtained the solution, and the algorithm can be terminated.
\end{remark}

\begin{remark}\label{remark2}
	Note that $x_i$-subproblems are proximal problems. Based on the first-order optimality condition \cite[Theorem~8.15]{rockafellar2009variational} for the proximal problem \eqref{eq:proximal}, we have
	\[
	- \frac{1}{\beta} \left( x_i^{k+1} - x_i^{k} - \beta \xi^{k}_i + \rho \alpha_k \bar{e}_{x_i}^k \right) \in \partial f_i \left( x^{k+1}_i \right).
	\]
	If we choose
	\[
	\xi^{k+1}_i=\xi^{k}_i +\frac{1}{\beta} \left( x_i^k-x_i^{k+1}-\rho\alpha_k \bar{e}_{x_i}^k \right),
	\]
	then we obtain $\xi^{k+1}_i \in \partial f_i \left( x^{k+1}_i\right)$ as well.
	
	Now, we can write two compact forms of DDRSM. The first is:
	\begin{equation}\label{disDR}
		\left\{\begin{array}{l}
			x_i^{k+1} = \operatorname{prox}_{\beta f_i} \left( x_i^{k} + \beta \xi^{k}_i - \rho \alpha_k \bar{e}_{x_i}^k \right), \\[1ex]
			x^{k+1}_i + \beta \xi_i^{k+1} = x_i^{k} + \beta \xi_i^{k} - \rho \alpha_k \bar{e}_{x_i}^k, \\[1ex]
			\lambda^{k+1} = \lambda^{k} - \rho \alpha_k \left( e_\lambda^k - \beta A \bar{e}_x^k \right).
		\end{array}\right.
	\end{equation}
	Here, the second line is obtained by using the previous property of the proximal operator.
	
	By utilizing the notations introduced in \eqref{DRSO3} for $ u $ and $ F(u) $, and denoting
	\begin{equation}\label{eq:compact}
		d_\beta(x^k, \bar{\lambda}^k) := \left( \begin{array}{c}
			e_{x_1} \left( x_1^k, \bar{\lambda}^k \right) \\
			\vdots \\
			e_{x_m} \left( x_m^k, \bar{\lambda}^k \right) \\
			e_\lambda \left( x^k \right) - \beta \sum_{i=1}^m A_i e_{x_i} \left( x_i^k, \bar{\lambda}^k \right)
		\end{array} \right), \quad
		w^k := \left( \begin{array}{c}
			x_1^k + \beta \xi_1^k \\
			\vdots \\
			x_m^k + \beta \xi_m^k \\
			\lambda^k
		\end{array} \right),
	\end{equation}
	we obtain a more compact form of DDRSM as
	\begin{equation}\label{eq:DDRSM_compact}
		w^{k+1} = w^k - \rho \alpha_{k} d_\beta \left( x^k, \bar{\lambda}^k \right).
	\end{equation}
\end{remark}

\section{Convergence analysis of DDRSM for weakly convex problems}\label{sec:converAnalysis}

In this section, we prove the linear convergence of Algorithm \ref{Algo} by introducing an assumption on the error bound.
In Remark \ref{remark2}, we demonstrated the relationship between DDRSM and the natural residual $E_\beta$. The error bound condition that we will introduce is also related to the natural residual.
We now present the following assumption:

\begin{assu}[Error Bound Condition]\label{assu:errorbound}
	Let $X^*,\Lambda^*$ be the solution sets for problem \eqref{DRSOVI}, and define
	\[
	W^* := \left\{ w^* := \left( \begin{array}{c}
		x^* + \beta \xi^* \\
		\lambda^*
	\end{array} \right) \ \Bigg| \ x^* \in X^*,\ \lambda^* \in \Lambda^*,\ \xi^* \in \partial f(x^*) \right\}.
	\]
	Then there exist constants $ \tau, \delta > 0 $ such that, for all $w=\left( \begin{array}{c}
		x + \beta \xi \\
		\lambda
	\end{array} \right)$, $x\in\mathcal{X}, \lambda\in\mathbb{R}^l$, $\xi\in\partial f(x)$, we have
	\begin{equation}\label{eq:errorbound}
		\operatorname{dist}(w,W^*) \leq \tau \|E_\beta(x,\bar{\lambda})\|,
	\end{equation}
	whenever $\|E_\beta(x,\bar{\lambda})\|\leq \delta$.
\end{assu}

\begin{remark}
	Assumption \ref{assu:errorbound} is widely recognized and has been introduced in the literature \cite{luo1992linear,luo1993error}. 
	From Lemma \ref{lem:naturalmap}, we know the equivalence between the solution sets and the zero points of $E_\beta$.
	The validity of the error bound condition is subsequently established for semistable variational inequalities under mild conditions (see \cite[Proposition~6.2.1]{facchinei2003finite}).
	To illustrate the validity of Assumption \ref{assu:errorbound}, we provide two examples from \cite{tseng2009coordinate} that satisfy Assumption \ref{assu:errorbound}.
	Consider a problem in the form of \eqref{DRSO}:
	\begin{itemize}
		\item $f_i(x)=h_i(M_ix_i)$, $\mathcal{X}_i$ are polyhedral sets, $h_i$ is twice continuously differentiable on $\mathbb{R}^{n_i}$ with a Lipschitz continuous gradient, and on any compact convex set, $h_i$ is strongly convex. This example has been analyzed in \cite[Lemma~6]{tseng2009coordinate};
		\item $f_i$ are quadratic functions, and $\mathcal{X}_i$ are polyhedral sets. This example has been analyzed in \cite[Theorem~4]{tseng2009coordinate}.
	\end{itemize}
\end{remark}

\begin{remark}
	The parameter $\delta$ serves as a constant that characterizes the ``locality'' of the error bound property within the problem. In practical scenarios, the error bound property can be satisfied over a large region or even globally for many functions, allowing for potentially large or arbitrarily large values of $\delta$ based on the chosen $\tau$.
	As a result, it becomes feasible to locate the initial point or an iteration point $x^{k_0}$ within the region where the error bound condition holds for some $\tau, \delta > 0$, which is the basis for the following convergence analysis.
\end{remark}

Next, to facilitate the subsequent proofs, we present two lemmas.
\begin{lem}\label{lem:alpha}
	Assume that $0< \beta <\frac{1}{\|A\|}$. Then the sequence $\left\{\alpha_k\right\}_{k=0}^{\infty}$ generated by Algorithm \ref{Algo} is bounded both above and below by positive constants, i.e.,
	\begin{equation}\label{eq:alphabound}
		\frac{1}{2} < \alpha_k \leq \frac{2+\beta\|A\|}{2(1-\beta\|A\|)}, \quad \forall k\geq 0.
	\end{equation}
\end{lem}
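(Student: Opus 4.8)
The plan is to bound $\alpha_k = \varphi_k/\psi_k$ by separately bounding $\varphi_k$ from below and $\psi_k$ from above, both in terms of the quantity $\|\bar e_x^k\|^2 + \|e_\lambda^k\|^2$. The key elementary tool is the Cauchy–Schwarz inequality applied to the cross term $\beta\langle e_\lambda^k, A\bar e_x^k\rangle$, together with the bound $\|A\bar e_x^k\| \le \|A\|\,\|\bar e_x^k\|$ coming from the definition of the spectral norm. Writing $a := \|\bar e_x^k\|$ and $c := \|e_\lambda^k\|$ for brevity, we have $|\beta\langle e_\lambda^k, A\bar e_x^k\rangle| \le \beta\|A\|\,a c \le \tfrac{\beta\|A\|}{2}(a^2 + c^2)$.

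First I would handle the numerator. From $\varphi_k = a^2 + c^2 - \beta\langle e_\lambda^k, A\bar e_x^k\rangle$ and the bound above, $\varphi_k \ge a^2 + c^2 - \tfrac{\beta\|A\|}{2}(a^2+c^2) = (1 - \tfrac{\beta\|A\|}{2})(a^2+c^2)$. Since $0 < \beta < 1/\|A\|$ gives $\beta\|A\| < 1$, this yields $\varphi_k \ge (1 - \tfrac{\beta\|A\|}{2})(a^2+c^2) > \tfrac12 (a^2+c^2) > 0$, which in particular confirms positivity of the step size. For the denominator, I would expand $\|e_\lambda^k - \beta A\bar e_x^k\|^2 = c^2 - 2\beta\langle e_\lambda^k, A\bar e_x^k\rangle + \beta^2\|A\bar e_x^k\|^2 \le c^2 + \beta\|A\|(a^2+c^2) + \beta^2\|A\|^2 a^2$, so that $\psi_k = a^2 + \|e_\lambda^k - \beta A\bar e_x^k\|^2 \le a^2 + c^2 + \beta\|A\|(a^2+c^2) + \beta^2\|A\|^2 a^2 \le (1+\beta\|A\|)^2(a^2+c^2)$ after absorbing the $\beta^2\|A\|^2 a^2$ term (using $\beta\|A\| + \beta^2\|A\|^2 \le (1+\beta\|A\|)^2 - 1$). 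This also re-derives the stated fact that $\psi_k > 0$ whenever $(a,c)\ne(0,0)$.

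Combining the two bounds, for $(a,c)\ne(0,0)$ we get
\[
\alpha_k = \frac{\varphi_k}{\psi_k} \ge \frac{(1-\tfrac{\beta\|A\|}{2})(a^2+c^2)}{(1+\beta\|A\|)^2(a^2+c^2)} > \frac12 \cdot \frac{1}{(1+\beta\|A\|)^2},
\]
but to reach the clean stated lower bound $\alpha_k > \tfrac12$ I expect a slightly sharper upper estimate on $\psi_k$ is needed: I would instead bound the cross term in $\psi_k$ below (not its absolute value) to get $\psi_k \le a^2 + c^2 + \beta\|A\|(a^2+c^2) + \beta^2\|A\|^2 a^2$, then compare numerator and denominator termwise rather than through $(a^2+c^2)$, exploiting that the $-\beta\langle\cdot,\cdot\rangle$ term in $\varphi_k$ and the $-2\beta\langle\cdot,\cdot\rangle$ term in $\psi_k$ have consistent signs; writing $t := \langle e_\lambda^k, A\bar e_x^k\rangle$ and $s := \|A\bar e_x^k\|^2$, one has $\alpha_k = \frac{a^2 + c^2 - \beta t}{a^2 + c^2 - 2\beta t + \beta^2 s}$, and the inequalities $\beta^2 s \le \beta^2\|A\|^2 a^2$ and $|t|\le\|A\| a c$ reduce the whole claim to a two-variable inequality in $\beta\|A\|$ and the ratio $a/c$, which can be checked directly. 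For the upper bound $\alpha_k \le \frac{2+\beta\|A\|}{2(1-\beta\|A\|)}$ the same termwise comparison applies, now bounding $\varphi_k$ above and $\psi_k$ below.

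The main obstacle is the bookkeeping in the termwise comparison: getting exactly the constants $\tfrac12$ and $\tfrac{2+\beta\|A\|}{2(1-\beta\|A\|)}$ rather than slightly weaker ones requires choosing the right sign on each Cauchy–Schwarz application (bounding $t$ from the correct side in numerator versus denominator) and not prematurely collapsing to $a^2+c^2$. Everything else is routine; there is no analytical subtlety beyond Cauchy–Schwarz, the spectral-norm inequality, and $\beta\|A\| < 1$.
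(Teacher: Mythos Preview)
Your approach is correct and, for the upper bound, essentially identical to the paper's: bound $\varphi_k \le \tfrac{2+\beta\|A\|}{2}(a^2+c^2)$ via Cauchy--Schwarz and $\psi_k \ge (1-\beta\|A\|)(a^2+c^2)$ the same way, then divide.

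For the lower bound, however, you are working harder than necessary. You correctly realize that separately bounding $\varphi_k$ below and $\psi_k$ above in terms of $a^2+c^2$ is too lossy, and you set up the right expression $\alpha_k = \dfrac{a^2+c^2-\beta t}{a^2+c^2-2\beta t+\beta^2 s}$. But you then plan to invoke $|t|\le\|A\|ac$ and reduce to an inequality in $\beta\|A\|$ and $a/c$. This is unnecessary: the inequality $\alpha_k>\tfrac12$ is equivalent (after cross-multiplying) to $2\varphi_k>\psi_k$, i.e.\ $a^2+c^2>\beta^2 s$, in which the cross term $t$ has cancelled completely. Since $\beta^2 s\le\beta^2\|A\|^2 a^2<a^2\le a^2+c^2$, you are done in one line. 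This direct comparison $\psi_k<2\varphi_k$ is exactly how the paper argues, and it bypasses any two-variable case analysis. Your plan would eventually reach the same place, but you should notice that the ``correct sign on each Cauchy--Schwarz application'' you worry about is a non-issue here: no bound on $t$ is needed at all for the lower bound.
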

\begin{proof}
	By invoking the Cauchy-Schwarz inequality
	$$
	\langle a, b\rangle \leq \frac{\theta}{2}\|a\|^2+\frac{1}{2 \theta}\|b\|^2, \quad \forall a, b \in \mathbb{R}^n \text{ and } \theta>0,
	$$
	with the specifications $a:=e_\lambda^k$, $b:=A e_x^k$ and $\theta:=\|A\|$, we have
	$$
	\begin{aligned}
	\left\langle e_\lambda^k, A e_x^k\right\rangle & \leq \frac{\|A\|}{2}\left\|e_\lambda^k\right\|^2+\frac{1}{2\|A\|}\left\|A e_x^k\right\|^2 \\
	& \leq \frac{\|A\|}{2}\left(\left\|e_\lambda^k\right\|^2+\left\|e_x^k\right\|^2\right).
	\end{aligned}
	$$
	By substituting the above inequality into \eqref{eq:alpha}, we have
	\begin{equation}\label{eq:phivse}
		\begin{aligned}
			\varphi_k & =\left\|\bar{e}_x^k\right\|^2+\left\|e_\lambda^k\right\|^2-\beta \left\langle e_\lambda^k, A \bar{e}_x^k\right\rangle \\
			& \geq \frac{\left(2-\beta\|A\|\right)}{2}\left(\left\|e_\lambda^k\right\|^2+\left\|\bar{e}_x^k\right\|^2\right)=\frac{\left(2-\beta\|A\|\right)}{2}\|E_\beta(x^k,\bar{\lambda}^k)\|^2.
		\end{aligned}
	\end{equation}
	Similarly, we have
	\begin{equation}\label{eq:phiub}
		\varphi_k \leq \frac{\left(2+\beta\|A\|\right)}{2}\|E_\beta(x^k,\bar{\lambda}^k)\|^2.
	\end{equation}
	Since $ \beta < \frac{1}{\|A\|} $, we have $ \beta^2 \|A\bar{e}_x^k\|^2<\|\bar{e}_x^k\|^2 $, then 
	\begin{equation*}
		\begin{aligned}
			\psi_k &= \|\bar{e}_x^k\|^2 + \|e_\lambda^k - \beta A \bar{e}_x^k\|^2\\
			&= \|\bar{e}_x^k\|^2 + \|e_\lambda^k\|^2 - 2\beta  \left\langle e_\lambda^k, A \bar{e}_x^k\right\rangle + \beta^2 \|A\bar{e}_x^k\|^2 \\
			&< 2\left(\|\bar{e}_x^k\|^2 + \|e_\lambda^k\|^2 - \beta \left\langle e_\lambda^k, A \bar{e}_x^k \right\rangle \right)= 2\varphi_k.
		\end{aligned}
	\end{equation*}
	As a result, $\alpha_k=\frac{\varphi_k}{\psi_k}>\frac{1}{2}$.
	On the other hand, we have
	\begin{equation*}
		\begin{aligned}
			\psi_k &= \|\bar{e}_x^k\|^2 + \|e_\lambda^k\|^2 - 2\beta  \left\langle e_\lambda^k, A \bar{e}_x^k\right\rangle + \beta^2 \|A\bar{e}_x^k\|^2 \\
			&\geq (1-\beta\|A\|)\left(\|\bar{e}_x^k\|^2 + \|e_\lambda^k\|^2\right) + \beta^2 \|A\bar{e}_x^k\|^2 \\
			&\geq (1-\beta\|A\|)\|E_\beta(x^k,\bar{\lambda}^k)\|^2.
		\end{aligned}
	\end{equation*}
	Combining with \eqref{eq:phiub}, we have $\alpha_k\leq \frac{2+\beta\|A\|}{2(1-\beta\|A\|)}$.
\end{proof}

\begin{lem}
	Let $ \left\{w^k\right\}_{k=0}^{\infty} $ be defined as above, and let $ \left(\begin{array}{c}x^*\\ \lambda^*\end{array}\right) $ be a solution pair for \eqref{DRSO3}. We have
	$$ \left\langle w^{k}-w^*, d_\beta(x^k,\bar{\lambda}^k) \right\rangle \geq \varphi_k + \beta \left\langle x^k-x^*, \xi^k-\xi^*\right\rangle, $$
	where $ \varphi_k = \left\|e_{x}^{k}\right\|^{2} +\left\|e_{\lambda}^{k}\right\|^{2} -\beta \left\langle e_\lambda^k, A e_{x}^{k}\right\rangle $, $w^*=x^*+\beta\xi^*$ and $ \xi^*\in \partial f(x^*) $. \label{lem:crossterm}
\end{lem}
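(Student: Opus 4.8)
The plan is to expand $\langle w^k-w^*,d_\beta(x^k,\bar\lambda^k)\rangle$ block-by-block and then bound it below using only two ingredients: the defining obtuse-angle inequality of the Euclidean projection that produces $\bar e_x^k$, and the variational inequality \eqref{DRSOVI} satisfied at the solution. Write $z^k:=P_{\mathcal X}(x^k-\beta(\xi^k-A^\top\bar\lambda^k))$, so that $\bar e_x^k=x^k-z^k$; abbreviate $e:=e_\lambda^k=\beta(Ax^k-b)$, $\Delta x:=x^k-x^*$, $\Delta\xi:=\xi^k-\xi^*$, $\Delta\lambda:=\lambda^k-\lambda^*$. Feasibility $Ax^*=b$ gives the identity $e=\beta A\Delta x$, which is used twice, and the relation $\bar\lambda^k=\lambda^k-e_\lambda^k$ from \eqref{eq:defbarla} gives $\xi^k-A^\top\bar\lambda^k=(\xi^k-A^\top\lambda^k)+A^\top e$.

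First I would split
\[
\langle w^k-w^*,d_\beta(x^k,\bar\lambda^k)\rangle=\langle\Delta x+\beta\Delta\xi,\,\bar e_x^k\rangle+\langle\Delta\lambda,\,e-\beta A\bar e_x^k\rangle ,
\]
use $\langle\Delta\lambda,e\rangle=\langle\Delta\lambda,\beta A\Delta x\rangle=\beta\langle A^\top\Delta\lambda,\Delta x\rangle$, and regroup into $\langle\Delta x,\bar e_x^k\rangle+\beta\langle\Delta\xi-A^\top\Delta\lambda,\bar e_x^k\rangle+\beta\langle A^\top\Delta\lambda,\Delta x\rangle$. Next I apply the projection inequality $\langle v-z^k,\,z^k-x^k+\beta(\xi^k-A^\top\bar\lambda^k)\rangle\ge0$ at the admissible test point $v=x^*$; since $x^*-z^k=\bar e_x^k-\Delta x$, this rearranges to
\[
\langle\Delta x,\bar e_x^k\rangle\ \ge\ \|\bar e_x^k\|^2+\beta\langle\Delta x,\xi^k-A^\top\bar\lambda^k\rangle-\beta\langle\bar e_x^k,\xi^k-A^\top\bar\lambda^k\rangle .
\]
Substituting this lower bound into the regrouped expression and then expressing $\xi^k-A^\top\bar\lambda^k=(\xi^k-A^\top\lambda^k)+A^\top e$ throughout makes the two $\langle\bar e_x^k,\xi^k-A^\top\bar\lambda^k\rangle$-type terms cancel, produces the cross term $-\beta\langle e,A\bar e_x^k\rangle$, and, via $\beta\langle A\Delta x,e\rangle=\|e\|^2$, also produces $\|e_\lambda^k\|^2$. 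Collecting these three contributions reconstitutes $\varphi_k=\|\bar e_x^k\|^2+\|e_\lambda^k\|^2-\beta\langle e_\lambda^k,A\bar e_x^k\rangle$, leaving the remainder $\beta\langle\Delta x,\xi^k-A^\top\lambda^k\rangle+\beta\langle A^\top\Delta\lambda,\Delta x\rangle-\beta\langle\xi^*-A^\top\lambda^*,\bar e_x^k\rangle$.

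To close, I would combine the first two remainder terms into $\beta\langle\Delta x,\xi^k-A^\top\lambda^*\rangle=\beta\langle\Delta x,\xi^k-\xi^*\rangle+\beta\langle\Delta x,\xi^*-A^\top\lambda^*\rangle$, so the remainder equals $\beta\langle\Delta x,\xi^k-\xi^*\rangle+\beta\langle\Delta x-\bar e_x^k,\xi^*-A^\top\lambda^*\rangle$. Since $\Delta x-\bar e_x^k=z^k-x^*$ with $z^k\in\mathcal X$, the variational inequality \eqref{DRSOVI} at the solution gives $\langle z^k-x^*,\xi^*-A^\top\lambda^*\rangle\ge0$, whence the remainder is at least $\beta\langle x^k-x^*,\xi^k-\xi^*\rangle$, which is the claimed bound. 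The only genuine difficulty here is bookkeeping: several $A^\top(\cdot)$ terms involving $\lambda^k,\lambda^*,\bar\lambda^k$ and $e_\lambda^k$ must be tracked simultaneously, and the whole argument hinges on the two exact simplifications noted above — the cancellation of the $\langle\bar e_x^k,\xi^k-A^\top\bar\lambda^k\rangle$ terms, and the regeneration of the full $\varphi_k$ (especially the $\|e_\lambda^k\|^2$ summand) from $\bar\lambda^k=\lambda^k-e_\lambda^k$ together with $e_\lambda^k=\beta A(x^k-x^*)$. It is worth observing that this lemma requires no convexity of $f$, only the projection inequality and the KKT characterization of $(x^*,\lambda^*)$; throughout, $\varphi_k$ is understood as in \eqref{eq:alpha}, i.e.\ with the barred residual $\bar e_x^k$.
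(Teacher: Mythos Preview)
Your proof is correct and uses exactly the same two ingredients as the paper: the obtuse-angle projection inequality at the test point $x^*\in\mathcal X$, and the variational inequality \eqref{DRSOVI} at $(x^*,\lambda^*)$ with the test point $z^k=x^k-\bar e_x^k\in\mathcal X$. The only difference is organizational: the paper first multiplies the VI inequality by $\beta$ and adds it to the projection inequality, obtaining the single combined inequality $\langle x^k-x^*-\bar e_x^k,\ \bar e_x^k-\beta(\xi^k-\xi^*)+\beta A^\top(\bar\lambda^k-\lambda^*)\rangle\ge 0$, and then expands $\langle w^k-w^*,d_\beta\rangle$ and invokes this once; you instead expand first, insert the projection bound for $\langle\Delta x,\bar e_x^k\rangle$, track the algebra until $\varphi_k$ reassembles, and only then apply the VI to discard the residual $\beta\langle z^k-x^*,\xi^*-A^\top\lambda^*\rangle$. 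Your remark that $\varphi_k$ must be read with $\bar e_x^k$ (as in \eqref{eq:alpha}) rather than $e_x^k$ as literally typeset in the lemma statement is also correct.
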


\begin{proof}
	The solution for \eqref{DRSO3} should also satisfy \eqref{DRSOVI}, which means that the following variational inequality holds:
	\begin{equation*}\label{eq:VI}
		\left\langle x^\prime-x^*,\;\xi^*-A^\top\lambda^*\right\rangle\geq 0, \quad \forall x^\prime \in \mathcal{X}.
	\end{equation*}
	Take $ x^\prime:=P_{\mathcal{X}}(x^k-\beta(\xi^k-A^\top\bar{\lambda}^k))=x^k-e_{x}(x^k,\bar{\lambda}^k) $. 
	Then we have 
	\begin{equation} \label{eq:exVI}
		\left\langle x^k-e_{x}(x^k,\bar{\lambda}^k)-x^*,\;\xi^*-A^\top\lambda^*\right\rangle\geq 0.
	\end{equation}
	According to the projection property, since $x^\prime$ is a projection onto $\mathcal{X}$ and $x^*\in\mathcal{X}$, we have
	\begin{equation*}
		\left\langle \left(x^k-\beta(\xi^k-A^\top\bar{\lambda}^k)\right)-x^\prime, x^*-x^\prime \right\rangle \leq 0, 
	\end{equation*}
	which is
	\begin{equation}\label{eq:exproj}
		\left\langle e_{x}(x^k,\bar{\lambda}^k) - \beta(\xi^k-A^\top\bar{\lambda}^k),\; x^*-x^k+e_{x}(x^k,\bar{\lambda}^k)\right\rangle \leq 0.
	\end{equation}
	Multiplying inequality \eqref{eq:exVI} by $ \beta $, and then adding \eqref{eq:exproj} to it, we obtain
	\begin{equation} 
		\left\langle x^k-x^*-e_{x}(x^k,\bar{\lambda}^k),\; e_{x}(x^k,\bar{\lambda}^k)-\beta(\xi^k-\xi^*)+\beta A^\top(\bar{\lambda}^k-\lambda^*)\right\rangle\geq 0.
	\end{equation}
	Rearranging the terms, we could deduce
	\begin{equation}
		\begin{aligned}
			&\left\langle w^{k}-w^*,\;d_\beta(x^k,\bar{\lambda}^k)\right\rangle \\
			=&\left\langle z^k-z^*,\; e_{x}(x^k,\bar{\lambda}^k)\right\rangle + \beta \left\langle {\lambda}^k-\lambda^*,\; A(x^k-x^*-e_{x}(x^k,\bar{\lambda}^k))\right\rangle \\
			\geq & \|e_{x}(x^k,\bar{\lambda}^k)\|^2 + \beta\left\langle x^k-x^*,\;\xi^k-\xi^*\right\rangle + \beta\left\langle \lambda^k - \bar{\lambda}^k,\; A(x^k-x^*-e_{x}(x^k,\bar{\lambda}^k))\right\rangle \\
			=& \|e_{x}(x^k,\bar{\lambda}^k)\|^2 + \|e_\lambda(x^k)\|^2 - \beta \left\langle e_\lambda(x^k),\; A e_{x}(x^k,\bar{\lambda}^k)\right\rangle + \beta \left\langle x^k-x^*,\;\xi^k-\xi^*\right\rangle\\
			=& \varphi_k + \beta \left\langle x^k-x^*,\;\xi^k-\xi^*\right\rangle,
		\end{aligned}
	\end{equation}
	which finishes the proof.
\end{proof}

\begin{thm}\label{thm:convergence}
	Suppose Assumptions \ref{assu:f} and \ref{assu:sets} hold,
	and suppose the error bound Assumption \ref{assu:errorbound} holds for some $ \tau,\delta>0$.
	Let $c_0>0$ be an upper bound of all weak convexity coefficients of $f_i$, or in other words, $f$ is $c_0$-weakly convex.
	Let $\beta\in\left( 0, \operatorname{min}(\frac{1}{2c_0},\frac{1}{\|A\|+c_0},\frac{2}{\|A\|+2c_0\tau^2}) \right)$, and let $\left\{\left(\begin{array}{c}x^k\\ \lambda^k\end{array}\right)\right\}_{k=0}^{\infty}$ be a sequence generated by Algorithm \ref{Algo}.
	Then if there exist $x^{k_0}$ and $\lambda^{k_0}$ such that $E_\beta(x^{k_0},\bar{\lambda}^{k_0})\leq\delta$, we have	
	$$ \lim_{k\rightarrow\infty} \|E_\beta(x^k,\bar{\lambda}^k)\|^2 =0. $$	
	And the sequence $\left\{\left(\begin{array}{c}x^k\\ \lambda^k\end{array}\right)\right\}_{k=0}^{\infty}$ converges to a solution of \eqref{DRSOVI}.
\end{thm}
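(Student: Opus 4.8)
The plan is to establish a sufficient-decrease-type inequality for a suitable merit function built from $\|w^k - w^*\|^2$ plus an auxiliary term controlling the weak-convexity defect, then combine it with the error bound Assumption~\ref{assu:errorbound} to obtain $Q$-linear (hence in particular summable) decrease of $\|E_\beta(x^k,\bar\lambda^k)\|^2$ as soon as the iterate enters the region $\|E_\beta\|\le\delta$. First I would verify that once $\|E_\beta(x^{k_0},\bar\lambda^{k_0})\|\le\delta$, all later iterates stay in this region; this is the standard ``invariance'' step and should follow from the (still-to-be-derived) monotonic decrease of $\mathrm{dist}(w^k,W^*)$ together with a bound relating $\|E_\beta(x^k,\bar\lambda^k)\|$ to $\mathrm{dist}(w^k,W^*)$ via nonexpansiveness of the projection in \eqref{eq:Ebeta}. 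Throughout I would fix an arbitrary solution $u^*=(x^{*\top},\lambda^{*\top})^\top$ of \eqref{DRSOVI} with $\xi^*\in\partial f(x^*)$ and the associated $w^*=(x^*+\beta\xi^*,\lambda^*)$, eventually taking $w^*$ to be the projection of $w^k$ onto $W^*$.

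The core computation expands $\|w^{k+1}-w^*\|^2 = \|w^k-w^*\|^2 - 2\rho\alpha_k\langle w^k-w^*, d_\beta(x^k,\bar\lambda^k)\rangle + \rho^2\alpha_k^2\|d_\beta(x^k,\bar\lambda^k)\|^2$ using the compact form \eqref{eq:DDRSM_compact}. For the cross term I would invoke Lemma~\ref{lem:crossterm} to get $\langle w^k-w^*, d_\beta\rangle \ge \varphi_k + \beta\langle x^k-x^*,\xi^k-\xi^*\rangle$, and then control the possibly-negative inner product $\langle x^k-x^*,\xi^k-\xi^*\rangle \ge -c_0\|x^k-x^*\|^2$ by weak monotonicity \eqref{eq:weakconvex} (Lemma~2.x). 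For the quadratic term I would use $\|d_\beta(x^k,\bar\lambda^k)\|^2 = \psi_k$ together with $\alpha_k = \varphi_k/\psi_k$ to collapse $\rho^2\alpha_k^2\psi_k = \rho^2\alpha_k\varphi_k$, so that the recursion reads
\begin{equation*}
\|w^{k+1}-w^*\|^2 \le \|w^k-w^*\|^2 - \rho(2-\rho)\alpha_k\varphi_k - 2\rho\alpha_k\beta\langle x^k-x^*,\xi^k-\xi^*\rangle .
\end{equation*}
The weak-monotonicity bound turns the last term into $+2\rho\alpha_k\beta c_0\|x^k-x^*\|^2$, which I must dominate. Here the key is to relate $\|x^k-x^*\|^2$ to $\|E_\beta(x^k,\bar\lambda^k)\|^2$: taking $w^*$ to be the projection of $w^k$ onto $W^*$ and noting $\|x^k-x^*\|\le\|w^k-w^*\| = \mathrm{dist}(w^k,W^*) \le \tau\|E_\beta(x^k,\bar\lambda^k)\|$ by Assumption~\ref{assu:errorbound}, while $\varphi_k \ge \frac{2-\beta\|A\|}{2}\|E_\beta(x^k,\bar\lambda^k)\|^2$ by \eqref{eq:phivse} and $\alpha_k>\tfrac12$ by Lemma~\ref{lem:alpha}. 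This is exactly where the hypothesis $\beta < \frac{2}{\|A\|+2c_0\tau^2}$ enters: it guarantees $(2-\beta\|A\|)/2 - \beta c_0\tau^2 > 0$, so that for $\rho$ close enough to the endpoints the net coefficient of $\|E_\beta(x^k,\bar\lambda^k)\|^2$ is strictly negative, yielding
\begin{equation*}
\|w^{k+1}-w^*\|^2 \le \|w^k-w^*\|^2 - \kappa\,\|E_\beta(x^k,\bar\lambda^k)\|^2
\end{equation*}
for some $\kappa>0$ depending on $\rho,\beta,c_0,\tau,\|A\|$ — I would need to be a little careful that $\rho(2-\rho)$ does not vanish, but since $\rho\in(0,2)$ this factor is positive and can be absorbed; more precisely the $\varphi_k$ term already gives a clean $\rho(2-\rho)\alpha_k\varphi_k\ge 0$ contribution and the genuinely delicate balance is between the $\alpha_k\varphi_k$ part and the $\alpha_k\beta c_0\|x^k-x^*\|^2$ part, both of which carry the same $\alpha_k$ and can be compared blockwise.

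From the telescoped inequality, summability $\sum_k\|E_\beta(x^k,\bar\lambda^k)\|^2<\infty$ follows, hence $\|E_\beta(x^k,\bar\lambda^k)\|^2\to 0$; this also confirms the invariance claim a posteriori since $\mathrm{dist}(w^k,W^*)$ is nonincreasing from some index on. To upgrade to convergence of $(x^k,\lambda^k)$ to a single solution, I would argue that $\{w^k\}$ is bounded (monotone distance to $W^*$), extract a cluster point, use $\|E_\beta(x^k,\bar\lambda^k)\|\to0$ together with Lemma~\ref{lem:naturalmap} to identify any cluster point with a point whose $w$-image lies in $W^*$, and then use the Opial-type / quasi-Fej\'er argument: since $\mathrm{dist}(w^k,W^*)\to$ some limit and $\|w^k-\bar w\|$ converges for that particular solution-image $\bar w$, the whole sequence $w^k$ converges, whence $x^k\to x^\infty$, $\xi^k\to\xi^\infty$ and $\lambda^k\to\lambda^\infty$ with $(x^\infty,\lambda^\infty)$ solving \eqref{DRSOVI}. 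I expect the \textbf{main obstacle} to be the bookkeeping in the core recursion: getting the constant in front of $\|E_\beta(x^k,\bar\lambda^k)\|^2$ to be provably positive requires simultaneously using $\alpha_k>1/2$, the lower bound \eqref{eq:phivse} on $\varphi_k$, the error bound to convert $\|x^k-x^*\|^2$ into $\tau^2\|E_\beta(x^k,\bar\lambda^k)\|^2$, and then checking that $\beta<\min(1/(2c_0),1/(\|A\|+c_0),2/(\|A\|+2c_0\tau^2))$ makes everything fit — the first two bounds on $\beta$ presumably enter elsewhere (ensuring $\beta<1/\|A\|$ for Lemma~\ref{lem:alpha}, and ensuring the proximal subproblems are strongly convex / the $\xi^{k+1}$ update is consistent), so disentangling which constraint does which job is the delicate part.
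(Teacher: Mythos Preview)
Your overall strategy matches the paper's almost step by step: expand $\|w^{k+1}-w^*\|^2$ via the compact update \eqref{eq:DDRSM_compact}, apply Lemma~\ref{lem:crossterm}, collapse $\rho^2\alpha_k^2\psi_k=\rho^2\alpha_k\varphi_k$, absorb the weak-convexity defect using the error bound, and conclude $\|E_\beta(x^k,\bar\lambda^k)\|\to0$. Two specific steps, however, need to be fixed.

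First, the inequality $\|x^k-x^*\|\le\|w^k-w^*\|$ is false in general: the $x$-block of $w^k-w^*$ is $(x^k-x^*)+\beta(\xi^k-\xi^*)$, and cancellation can make this shorter than $x^k-x^*$. What \emph{is} true---by applying weak monotonicity once more---is
\[
\|(x^k-x^*)+\beta(\xi^k-\xi^*)\|^2\ \ge\ (1-2\beta c_0)\,\|x^k-x^*\|^2,
\]
so that $c_0\|x^k-x^*\|^2\le c\,\operatorname{dist}^2(w^k,W^*)$ with $c:=c_0/(1-2\beta c_0)$. This is precisely where the hypothesis $\beta<1/(2c_0)$ enters (answering your own closing question), and the paper performs exactly this completion-of-squares. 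After the correction the positivity of the net coefficient in front of $\|E_\beta\|^2$ requires $\beta<2/(\|A\|+2c\tau^2)$ with $c$ rather than $c_0$; this is implied by the stated range for $\beta$ since $c>c_0$.

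Second, your Opial/quasi-Fej\'er argument for convergence of the full sequence does not go through. The sufficient-decrease inequality you derive holds only for the choice $w^*=w_k^*:=P_{W^*}(w^k)$, because that is the only $w^*$ for which $\|x^k-x^*\|$ (or rather the corrected quantity above) can be controlled by the error bound. You therefore obtain monotone decrease of $\operatorname{dist}(w^k,W^*)$ but \emph{not} of $\|w^k-\bar w\|$ for a fixed $\bar w\in W^*$, so the usual Opial lemma is unavailable. The paper sidesteps this by proving $\{w^k\}$ is Cauchy directly: from $\|w^{k+1}-w^k\|^2=\rho^2\alpha_k\varphi_k$, the bounds in Lemma~\ref{lem:alpha}, the sufficient decrease, and the error bound one obtains
\[
\|w^{k+1}-w^k\|\ \le\ \mathrm{const}\cdot\bigl(\operatorname{dist}(w^k,W^*)-\operatorname{dist}(w^{k+1},W^*)\bigr),
\]
which telescopes to a finite sum.
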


\begin{proof}
	Since $c_0$ is the upper bound of all weak convexity coefficients of $f_i$,
	by substituting $x=x^k$ and $y=x^*$ into equation \eqref{eq:weakconvex}, we have
	\begin{equation*}
			-\left\langle x^k-x^*,\; \xi^k-\xi^*\right\rangle \leq c_0\|x^k-x^*\|^2 \leq c_0\|x^k-x^*\|^2 + c_0\beta^2\|\xi^k - \xi^*\|^2.
	\end{equation*}
	Since $\beta\in(0,\frac{1}{2c_0})$, we set $c=\frac{c_0}{1-2c_0\beta}>c_0$. 
	Then we have
	\begin{equation*}
		-(2c\beta+1)\left\langle x^k-x^*,\; \xi^k-\xi^*\right\rangle \leq c\|x^k-x^*\|^2 + c\beta^2\|\xi^k - \xi^*\|^2.
	\end{equation*}
	Rearranging them, we get
	\begin{equation*}
		-\left\langle x^k-x^*,\; \xi^k-\xi^*\right\rangle \leq c\|x^k-x^*\|^2 + c\beta^2\|\xi^k - \xi^*\|^2 + 2c\beta\left\langle x^k-x^*,\; \xi^k-\xi^*\right\rangle=c\|(x^k-x^*)+\beta(\xi^k - \xi^*)\|^2.
	\end{equation*}
	Thus, by taking $w=w^k=\left(\begin{array}{c}x^k+\beta\xi^k\\ \lambda^k\end{array}\right)$ and $ w^* = w^*_k = \left(\begin{array}{c}x^*_k+\beta\xi^*_k\\ \lambda^*_k\end{array}\right) = P_{W^*}(w^k)$, we have
	\begin{equation*}
		\begin{aligned}
			-\left\langle x^k-x^*_k,\; \xi^k-\xi^*_k\right\rangle &\leq c\|(x^k-x^*_k)+\beta(\xi^k - \xi^*_k)\|^2 + c\|\lambda^k - \lambda^*_k\|^2\\
			&= c\;\operatorname{dist}^2(w^k,W^*).
		\end{aligned}
	\end{equation*}
	The inequality arises from the weak convexity of $f_i$.
	By Assumption \ref{assu:errorbound}, we have for all $k\geq k_0$,
	\begin{equation*}
		\operatorname{dist}(w^k,W^*) \leq \tau \|E_\beta(x^k,\bar{\lambda}^k)\|.
	\end{equation*}
	Without loss of generality, we assume $k_0=0$. Thus,
	\begin{equation}\label{eq:crossvsE}
		-\left\langle x^k-x^*_k,\; \xi^k-\xi^*_k\right\rangle\leq c\tau^2 \|E_\beta(x^k,\bar{\lambda}^k)\|^2.
	\end{equation}
	This, together with Lemma \ref{lem:alpha} and \ref{lem:crossterm}, yields
	\begin{equation}\label{eq:crossterm}
		\left\langle w^{k}-w^*_k,\; d_\beta(x^k,\bar{\lambda}^k)\right\rangle \geq \frac{2-\beta\|A\|-2\beta c\tau^2}{2}\|E_\beta(x^k,\bar{\lambda}^k)\|^2.
	\end{equation}
	
	Since $c>c_0$, we have $\beta \in \left(0,\frac{1}{\|A\|+c}\right)$.
	This means that the right-hand side of inequality \eqref{eq:crossterm} is positive, and we can get
	\begin{equation}
		\begin{aligned}
			\|w^{k+1}-w^*_k\|^2&\leq \|w^k -\rho\alpha_k d_\beta(x^k,\bar{\lambda}^k) -w^*_k\|^2\\
			&= \|w^k - w^*_k\|^2 - 2\rho\alpha_k\left\langle w^k  -w^*_k,\; d_\beta(x^k,\bar{\lambda}^k)\right\rangle  + \rho^2\alpha_k^2\| d_\beta(x^k,\bar{\lambda}^k)\|^2\\
			&\leq \|w^k - w^*_k\|^2 -  2\rho\alpha_k\left(\varphi_k + \beta\left\langle x^k-x^*_k,\; \xi^k-\xi^*_k\right\rangle\right)  - \rho^2\alpha_k^2\| d_\beta(x^k,\bar{\lambda}^k)\|^2 \\
			&\leq \|w^k - w^*_k\|^2 -  2\rho\alpha_k\varphi_k + \rho^2\alpha_k^2\psi_k + 2\rho\alpha_k\beta c\tau^2 \|E_\beta(x^k,\bar{\lambda}^k)\|^2,
		\end{aligned}
	\end{equation}
	where the second inequality comes from Lemma \ref{lem:crossterm} and the last one comes from \eqref{eq:crossvsE} and the definition of $ \psi_k $ in \eqref{eq:alpha}.
	Since $ \beta \in \left(0,\frac{1}{\|A\|+c}\right) $, according to Lemma \ref{lem:alpha}, if we choose $ \alpha_k:=\frac{\varphi_k}{\psi_k} $, we know that $ \alpha_k>\frac{1}{2} $. Then we have
	\begin{equation*}
		\begin{aligned}
			\|w^{k+1}-w^*_k\|^2&\leq \|w^k - w^*_k\|^2 -  2\rho\alpha_k\varphi_k + \rho^2\alpha_k\varphi_k + 2\rho\alpha_k\beta c \tau^2 \|E_\beta(x^k,\bar{\lambda}^k)\|^2\\
			&=\|w^k - w^*_k\|^2 - \rho(2-\rho)\alpha_k \varphi_k + 2\rho\alpha_k\beta c \tau^2 \|E_\beta(x^k,\bar{\lambda}^k)\|^2\\
			&=\|w^k - w^*_k\|^2 - \rho\alpha_k \left((2-\rho) \varphi_k - 2\beta c \tau^2 \|E_\beta(x^k,\bar{\lambda}^k)\|^2\right)\\
			&\leq\|w^k - w^*_k\|^2 - \rho\alpha_k \left((2-\rho) \frac{(2-\beta\|A\|)}{2} - 2\beta c \tau^2\right) \|E_\beta(x^k,\bar{\lambda}^k)\|^2\\
			&\leq\|w^k - w^*_k\|^2 - \rho\left(2-\beta\|A\|-2\beta c \tau^2 - \frac{(2-\beta\|A\|)\rho}{2}\right)\|E_\beta(x^k,\bar{\lambda}^k)\|^2.
		\end{aligned}
	\end{equation*}
	The second inequality comes from \eqref{eq:phivse}. The last one holds due to $ \alpha_k>\frac{1}{2} $. The coefficient of the $ \|E_\beta(x^k,\bar{\lambda}^k)\|^2 $ term is a constant $C = 2-\beta\|A\|-2\beta c \tau^2 - \frac{(2-\beta\|A\|)\rho}{2}$. 
	
	Also, from $c>c_0$, we know that $\beta\in\left(0,\frac{2}{\|A\|+2c\tau^2}\right)$. Therefore, we have $2-\beta\|A\|-2\beta c \tau^2>0$ and $2-\beta\|A\|>0$. If we choose $\rho$ to be small enough, the constant $C$ will be positive. Then we have
	\begin{equation}\label{eq:convergeC0}
		\begin{aligned}
			\operatorname{dist}^2(w^{k+1},W^*)&=\|w^{k+1}-w^*_{k+1}\|^2\leq\|w^{k+1}-w^*_k\|^2\\
			&\leq\|w^k - w^*_k\|^2 - C \|E_\beta(x^k,\bar{\lambda}^k)\|^2\\
			&=\operatorname{dist}^2(w^{k},W^*)- C \|E_\beta(x^k,\bar{\lambda}^k)\|^2.
		\end{aligned}
	\end{equation}
	
	This immediately leads to $ \lim_{k\rightarrow\infty} \|E_\beta(x^k,\bar{\lambda}^k)\|^2 =0 $.
	Recall the iteration formulation in \eqref{eq:DDRSM_compact}, we have
	\[
	\|w^{k+1} - w^k\|^2 = \rho^2\alpha_k^2 \|d_\beta(x^k,\bar{\lambda}^k)\|^2 = \rho^2 \alpha_k \cdot \alpha_k \|d_\beta(x^k,\bar{\lambda}^k)\|^2 = \rho^2 \alpha_k \cdot \alpha_k \psi_k = \rho^2 \alpha_k \varphi_k,
	\]
	where the second equality comes from the definition of $d_\beta(x^k,\bar{\lambda}^k)$ in \eqref{eq:compact} and 
	the third and fourth equalities come from the definition of $\varphi_k, \psi_k $ and $ \alpha_k $ in \eqref{eq:alpha}.
	By the upper and lower bounds of $\varphi_k$ in \eqref{eq:phivse} and \eqref{eq:phiub}, we have
	\begin{equation*}
		\frac{2 - \beta\|A\|}{2} \rho^2 \alpha_k \|E_\beta(x^k,\bar{\lambda}^k)\|^2 \leq \|w^{k+1} - w^k\|^2  \leq \frac{2 + \beta\|A\|}{2} \rho^2 \alpha_k \|E_\beta(x^k,\bar{\lambda}^k)\|^2.
	\end{equation*}
	Using the bounds of $\alpha_k$ given in \eqref{eq:alphabound} of Lemma \ref{lem:alpha}, we have
	\begin{equation*}
		\rho^2 \cdot \frac{2 - \beta\|A\|}{4}\|E_\beta(x^k,\bar{\lambda}^k)\|^2  < \|w^{k+1} - w^k\|^2 \leq \rho^2 \cdot \frac{2 +\beta\|A\|}{2} \frac{2+\beta\|A\|}{2(1-\beta\|A\|)} \|E_\beta(x^k,\bar{\lambda}^k)\|^2.
	\end{equation*}
	Since $\beta < \frac{1}{\|A\|}$, we can set
	\begin{equation*}
		c_1^2 = \rho^2 \cdot \frac{2 - \beta\|A\|}{4}, \quad \text{and} \quad c_2^2 = \rho^2 \cdot \frac{2 +\beta\|A\|}{2} \frac{2+\beta\|A\|}{2(1-\beta\|A\|)}.
	\end{equation*}
	Then, using \eqref{eq:convergeC0}, we have
	\begin{equation*}
		\begin{aligned}
			\|w^{k+1} - w^k\|^2 & \leq c_2^2 \|E_\beta(x^k,\bar{\lambda}^k)\|^2 \leq \frac{c_2^2}{C} \left( \operatorname{dist}^2(w^k,W^*) - \operatorname{dist}^2(w^{k+1},W^*) \right)\\
			&\leq \frac{c_2^2}{C} \left( \operatorname{dist}(w^k,W^*) - \operatorname{dist}(w^{k+1},W^*) \right) \cdot 2 \operatorname{dist}(w^k,W^*).
		\end{aligned}
	\end{equation*}
	And for all $ k \geq k_0 $, according to Assumption \ref{assu:errorbound}, we have $ \operatorname{dist}(w^k,W^*) \leq \tau  \|E_\beta(x^k,\bar{\lambda}^k)\|$, which leads to
	\begin{equation*}
		\begin{aligned}
			\|w^{k+1} - w^k\|^2 & \leq \frac{2\tau c_2^2}{C} \left( \operatorname{dist}(w^k,W^*) - \operatorname{dist}(w^{k+1},W^*) \right)  \|E_\beta(x^k,\bar{\lambda}^k)\|\\
			&< \frac{2\tau c_2^2}{Cc_1} \left( \operatorname{dist}(w^k,W^*) - \operatorname{dist}(w^{k+1},W^*) \right)  \| w^{k+1} - w^k\|.
		\end{aligned}
	\end{equation*}
	Thus, we have
	\begin{equation}\label{eq:wk_caythy}
		\|w^{k+1} - w^k\| < \frac{2\tau c_2^2}{Cc_1} \left( \operatorname{dist}(w^k,W^*) - \operatorname{dist}(w^{k+1},W^*) \right).
	\end{equation}
	Summing inequality \eqref{eq:wk_caythy} over $ k $ from 0 to $ N-1 $, we obtain
	\begin{equation}\label{eq:wk_caythy_sum}
		\begin{aligned}
			\sum_{k=0}^{N-1} \|w^{k+1} - w^k\| &\leq \frac{2\tau c_2^2}{Cc_1} \sum_{k=0}^{N-1} \left( \operatorname{dist}(w^k,W^*) - \operatorname{dist}(w^{k+1},W^*) \right) \\
			&= \frac{2\tau c_2^2}{Cc_1} \left( \operatorname{dist}(w^0,W^*) - \operatorname{dist}(w^{N},W^*) \right).
		\end{aligned}
	\end{equation}
	As $N$ goes to infinity, the left-hand side converges to a finite value.
	Thus, $ \left\{ w^k \right\}_{k=0}^{\infty} $ is a Cauchy sequence, and its limit $ w^\infty=(x^\infty+\xi^\infty,\lambda^\infty) $ satisfies $ E_\beta(x^\infty,\bar{\lambda}^\infty) = 0 $ which indicates that $ (x^\infty,\lambda^\infty) $ is a solution to \eqref{DRSOVI}.
\end{proof}

According to Lemma \ref{lem:naturalmap}, we have proven that the algorithm converges. 
In fact, combining \eqref{eq:convergeC0} with Assumption \ref{assu:errorbound}, we have
\begin{equation}\label{eq:linconvergence}
	\begin{aligned}
		\operatorname{dist}^2(w^{k+1},W^*)&\leq\operatorname{dist}^2(w^k,W^*) - C \|E_\beta(x^k,\bar{\lambda}^k)\|^2\\
		&\leq (1 - \frac{C}{\tau^2})\operatorname{dist}^2(w^k,W^*)\\
		&=C_q^2\operatorname{dist}^2(w^k,W^*).
	\end{aligned}
\end{equation}
Using triangle inequality and \eqref{eq:wk_caythy_sum}, we have
	\begin{equation*}
		\begin{aligned}
			\| w^{M} - w^* \| &\leq \sum_{k=M}^{\infty} \| w^{k+1} - w^{k} \| 
			\leq \frac{2\tau c_2^2}{Cc_1} \operatorname{dist}(w^M,W^*) \\
			&\leq \frac{2\tau c_2^2}{Cc_1} C_q^{M-N} \operatorname{dist}(w^N,W^*)
			\leq \frac{2\tau c_2^2}{Cc_1}C_q^{M-N} \| w ^{N} - w^* \|.
		\end{aligned}
	\end{equation*}
Since then, we have proven the linear convergence of the algorithm with linear convergence rate $C_q$.

\begin{thm} \label{thm:linear}
	Let $\left\{\left(\begin{array}{c}x^k\\ \lambda^k\end{array}\right)\right\}_{k=0}^{\infty}$ be a sequence generated by Algorithm \ref{Algo}. 
	With the same parameter setting of $\beta$, and under the same assumptions as Theorem \ref{thm:convergence} with respect to $ c_0 $, $ \tau $, and $ \delta $,
	if there exist $x^{k_0}$ and $\lambda^{k_0}$ such that $E_\beta(x^{k_0},\bar{\lambda}^{k_0})\leq\delta$, 
	then there exists $0 < C_q < 1$, such that for all $ k \geq k_0 $, the following inequality holds:
	$$ \operatorname{dist}(w^{k+1},W^*) \leq C_q \operatorname{dist}(w^k,W^*). $$
	This implies that the algorithm converges linearly, specifically,
	$$ \| w ^{M} - w^* \| \leq  \frac{2\tau c_2^2}{Cc_1} C_q^{M-N} \| w ^{N} - w^* \|. $$
	for all $M>N\geq k_0$, where $w^*$ is the limit point of the sequence as well as a solution to \eqref{DRSOVI}.
\end{thm}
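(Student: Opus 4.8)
The plan is to recycle, rather than re-derive, the estimates assembled in the proof of Theorem~\ref{thm:convergence}; Theorem~\ref{thm:linear} is essentially a clean packaging of inequalities \eqref{eq:convergeC0}, \eqref{eq:linconvergence} and \eqref{eq:wk_caythy_sum}. First I would recall that, under the stated choice of $\beta$ and with $\rho$ taken small enough, the proof of Theorem~\ref{thm:convergence} produces a constant $C>0$ with
\[
\operatorname{dist}^2(w^{k+1},W^*)\le \operatorname{dist}^2(w^k,W^*)-C\,\|E_\beta(x^k,\bar{\lambda}^k)\|^2 \qquad (k\ge k_0),
\]
so that $k\mapsto\operatorname{dist}(w^k,W^*)$ is non-increasing for $k\ge k_0$. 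Feeding in the error bound Assumption~\ref{assu:errorbound} in the form $\|E_\beta(x^k,\bar{\lambda}^k)\|^2\ge \operatorname{dist}^2(w^k,W^*)/\tau^2$ gives $\operatorname{dist}^2(w^{k+1},W^*)\le (1-C/\tau^2)\operatorname{dist}^2(w^k,W^*)$, which is exactly \eqref{eq:linconvergence}. Setting $C_q:=\sqrt{1-C/\tau^2}$, I would then verify $0<C_q<1$: positivity of $C$ is precisely the smallness of $\rho$ used in Theorem~\ref{thm:convergence}, while $C<\tau^2$ follows because the two displayed inequalities chained together give $C\|E_\beta(x^k,\bar{\lambda}^k)\|^2\le\tau^2\|E_\beta(x^k,\bar{\lambda}^k)\|^2$ along any iterate with nonzero residual (and if a residual vanishes the algorithm has already reached a solution, and the bound is trivial). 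This yields the asserted $Q$-linear decay $\operatorname{dist}(w^{k+1},W^*)\le C_q\operatorname{dist}(w^k,W^*)$ for all $k\ge k_0$.

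Next I would upgrade this distance decay to the claimed estimate on $\|w^M-w^*\|$. By Theorem~\ref{thm:convergence} the sequence $\{w^k\}$ converges to a limit $w^\infty=(x^\infty+\beta\xi^\infty,\lambda^\infty)$ with $E_\beta(x^\infty,\bar{\lambda}^\infty)=0$; since $x^k\to x^\infty$, $\xi^k\to\xi^\infty$ and the graph of $\partial f$ is closed, we have $\xi^\infty\in\partial f(x^\infty)$, hence $w^*:=w^\infty\in W^*$ and in particular $\operatorname{dist}(w^N,W^*)\le\|w^N-w^*\|$. For $M>N\ge k_0$, the triangle inequality along the tail gives $\|w^M-w^*\|\le\sum_{k=M}^{\infty}\|w^{k+1}-w^k\|$, and summing \eqref{eq:wk_caythy_sum} from index $M$ onward (all its ingredients being valid for $k\ge k_0$) the telescoping collapses to $\tfrac{2\tau c_2^2}{Cc_1}\operatorname{dist}(w^M,W^*)$, using $\operatorname{dist}(w^k,W^*)\to 0$. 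Iterating the $Q$-linear bound $M-N$ times gives $\operatorname{dist}(w^M,W^*)\le C_q^{M-N}\operatorname{dist}(w^N,W^*)\le C_q^{M-N}\|w^N-w^*\|$, and chaining these produces exactly
\[
\|w^M-w^*\|\le\frac{2\tau c_2^2}{Cc_1}\,C_q^{M-N}\,\|w^N-w^*\|.
\]

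Because the substantive analytic work is already carried out in Theorem~\ref{thm:convergence}, the only genuinely delicate point is that the descent inequality above itself uses the error bound at step $k$, so one must make sure Assumption~\ref{assu:errorbound} is legitimately available at \emph{every} index $k\ge k_0$ and not just at $k_0$; this is inherited from Theorem~\ref{thm:convergence}, where it is established that once $\|E_\beta(x^{k_0},\bar{\lambda}^{k_0})\|\le\delta$ the iterates remain in the region governed by Assumption~\ref{assu:errorbound} (via the monotone decrease of $\operatorname{dist}(w^k,W^*)$ together with the control of $\|E_\beta(x^k,\bar{\lambda}^k)\|$ by $\operatorname{dist}(w^k,W^*)$ coming from non-expansiveness of the projection defining $E_\beta$). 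Apart from this bookkeeping and the routine verification $0<C_q<1$, the proof is a direct reassembly of inequalities already proved, and I do not anticipate any further obstacle.
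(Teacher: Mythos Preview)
Your proposal is correct and follows essentially the same route as the paper: combine the descent inequality \eqref{eq:convergeC0} with Assumption~\ref{assu:errorbound} to obtain \eqref{eq:linconvergence} and hence the $Q$-linear decay of $\operatorname{dist}(w^k,W^*)$, then bound $\|w^M-w^*\|$ by the tail sum $\sum_{k\ge M}\|w^{k+1}-w^k\|$ via \eqref{eq:wk_caythy_sum} and iterate the contraction. If anything, you are more careful than the paper on two points the paper leaves implicit---the verification that $0<C_q<1$ and the persistence of the error bound for all $k\ge k_0$---so there is no gap to flag.
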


\section{Numerical experiments} \label{numericalExp}

In this section, we aim to validate the effectiveness of our proposed method through numerical experiments. Nonconvex models often outperform their convex counterparts in practical scenarios, particularly when modeling sparsity and low-rank structures. While DDRSM, as introduced by He and Han in \cite{he2016distributed}, has demonstrated superiority over methods like ADMM in solving convex problems, it remains uncertain whether DDRSM maintains this advantage in nonconvex scenarios. Thus, our objectives in the numerical experiments are twofold: to confirm the efficacy of DDRSM in handling nonconvex models, and to investigate whether DDRSM's performance advantage over ADMM and other ALM-based methods persists in these nonconvex problems.

We begin by addressing a compressed sensing problem related to signal reconstruction. Subsequently, we investigate the robust alignment of linearly correlated images using sparse and low-rank decomposition techniques. The experiments were conducted on a desktop computer with a 2.5 GHz Intel(R) Core(TM) i5-12400F processor and 32~GB of memory, and the algorithms were implemented using MATLAB 2023a.

\subsection{Compressed sensing for signal with $\ell_{1/2}$-norm regularization}

First, we conduct a simple numerical experiment on the LASSO model for compressed sensing.
Compressed sensing aims to reconstruct a sparse signal from a limited number of observations.
The observed data can be represented as follows:
\begin{equation*}
	v = M R u + \epsilon,
\end{equation*}
where $v \in \mathbb{R}^m$ is the observed data, $M \in \mathbb{R}^{m \times n}$ is a sparse measurement matrix (with $n \ll m$), $R\in \mathbb{R}^{n \times n}$ is an invertible transform (typically an orthogonal matrix such as the discrete Fourier transform or wavelet transform), $u \in \mathbb{R}^n$ is the signal to be reconstructed, and $\epsilon\in \mathbb{R}^m$ accounts for noise. Compressed sensing is applicable when the signal $x = Ru$ is sparse. The objective is to recover the signal $x^*$ by solving the following optimization problem:
\begin{equation*}
	x^* = \underset{x}{\operatorname{argmin}}\|x\|_0, \quad \text{s.t.}\; v = Mx,
\end{equation*}
	where $\|\cdot\|_0$ denotes the $\ell_0$ norm, representing the number of nonzero entries in a vector.

The LASSO method is a popular approach for reconstructing sparse signals and is widely used in conjunction with $\ell_{1/2}$ norm regularization.
In addition to the $\ell_{1/2}$ norm, LASSO can incorporate nonconvex norms to model data more effectively.
The LASSO model can be formulated as a constrained minimization problem:
\begin{equation*}
	\min _{x}\left\{\frac{1}{2\delta}\|Mx-v\|_{2}^2+\|x\|_{q}^q \right\},
\end{equation*}
where the parameter $\delta > 0$ balances noise reduction and sparsity, and $\|\cdot\|_{q}$ represents the $\ell_q$ quasi-norm with $0 < q < 1$.

By introducing a variable $y$ such that $Mx = y$, the problem can be reformulated as:
\begin{equation}\label{eq:cs}
	\min_{x}\left\{ \|x\|_{q}^q + \frac{1}{2\delta}\|y - v\|_{2}^2 \ \bigg|\ Mx = y \right\}.
\end{equation}
Before proceeding with the experiments, we need to address the nonconvex term $\|x\|_{q}^q$ in the objective function.

Given the inherent challenges in solving the original objective function due to its nonconvex, nonsmooth, and non-Lipschitz nature—which further fails to meet the weak convexity required by our algorithm—we pursue an alternative strategy by focusing on a relaxed, smooth problem. We introduce $r^q_{\varepsilon}(x)$ to represent the smoothed approximation of $\|x\|_{q}^q\;(q\in(0,1))$ in model \eqref{eq:cs}, detailed by
\begin{equation}\label{eq:smoothing}
    r^q_{\varepsilon}(x) = \sum_{i=1}^{n} r^q_{\varepsilon,i}(x_i) = \sum_{i=1}^{n} \left\{
    \begin{array}{ll}
        |x_i|^q, & |x_i| > \varepsilon, \\[1ex]
        \dfrac{1}{2} q \varepsilon^{q-2} x_i^2 + \dfrac{q - 2}{2} \varepsilon^q, & |x_i| \leq \varepsilon,
    \end{array}\right.
\end{equation}
where $\varepsilon$ is a small positive constant. 
This approximation ensures that $r^q_{\varepsilon,i}(x_i)$ ranges from $[\frac{q-2}{2}\varepsilon^q,+\infty)$, and its derivative, $(r^q_{\varepsilon,i}(x_i))^\prime$, falls within $[-q\varepsilon^{q-1},q\varepsilon^{q-1}]$. 
Such a smoothing technique, leveraging a quadratic smoothing function, facilitates the derivation of a closed-form solution for the proximal operator of $r^q_{\varepsilon}(x)$, a strategy echoed in previous studies \cite{wu2014batch, fan2014convergence}.

For $q=\frac{1}{2}$, the proximal operator of $\|x\|_{q}^q$ has a closed-form solution similar to the soft-thresholding operator, as discussed in \cite{xu2012l12}. The $i$-th element of the operator is given by:
\begin{equation}\label{eq:prox1/2}
	\left( \operatorname{prox}_{\beta \|\cdot\|_{1/2}^{1/2}}(x) \right)_i = \underset{y}{\operatorname{argmin}}\left\{ |y|^{1/2} + \frac{1}{2\beta} \|y - x_i\|_{2}^2 \right\}
	= \left\{
	\begin{array}{ll}
		f_d(x_i), & |x_i| > \dfrac{\sqrt[3]{54}}{4} \beta^{2/3}, \\[2ex]
		0, & |x_i| \leq \dfrac{\sqrt[3]{54}}{4} \beta^{2/3},
	\end{array}\right.
\end{equation}
where $f_d(x_i) = \dfrac{2}{3} x_i \left( 1 + \cos \left( \dfrac{2\pi}{3} - \dfrac{2}{3} \phi_\beta\left( x_i \right) \right) \right)$ with $\phi_\beta\left( x_i \right) = \arccos \left( \dfrac{\beta}{8} \left( \dfrac{\left| x_i \right|}{3} \right)^{-3/2} \right)$.

To calculate the proximal operator of $r^q_{\varepsilon}(x)$ and $|x|^q$, we need to analyze the objective function of the proximal problem. For convenience, we denote $R_{x_i}(y) = \dfrac{1}{2\beta}(y - x_i)^2 + r^q_{\varepsilon,i}(y)$ and $T_{x_i}(y) = \dfrac{1}{2\beta}(y - x_i)^2 + |y|^q$. Without loss of generality, we assume $x_i > 0$.

When $|y| \geq \varepsilon$, $R_{x_i}(y) = T_{x_i}(y)$. Since we assume $x_i > 0$, we have $T_{x_i}(\varepsilon) \leq T_{x_i}(-\varepsilon)$. The minimum of $R_{x_i}$ is attained at either $\varepsilon$ or the local minima of $T_{x_i}$. When $|y| \leq \varepsilon$, $R_{x_i}(y)$ is a quadratic function, so the minimum is attained at $\dfrac{x_i}{1 + \beta q \varepsilon^{q - 2}}$ if $\dfrac{x_i}{1 + \beta q \varepsilon^{q - 2}} < \varepsilon$; otherwise, it is attained at $\varepsilon$.

According to \cite[Theorem 1]{xu2012l12}, the monotonicity of the function $T_{x_i}$ differs depending on whether $x_i \leq \dfrac{3}{4} \beta^{2/3}$. Therefore, we proceed by discussing the cases accordingly:

\begin{itemize}
	\item \textbf{Case 1:} $x_i > \dfrac{3}{4} \beta^{2/3}$. According to \cite[Theorem 1]{xu2012l12}, there is a local minimum of $T_{x_i}$ at $f_d(x_i)$ other than zero. When $|y| \geq \varepsilon$, if $\varepsilon \leq f_d(x_i)$, the local minima of $R_{x_i}$ and $T_{x_i}$ coincide at $f_d(x_i)$. Otherwise, the minimum of $R_{x_i}$ is attained at $\varepsilon$, since $R_{x_i}$ is monotonic when $y > \varepsilon \geq f_d(x_i)$. When $|y| \leq \varepsilon$, the minimum is attained at $\dfrac{x_i}{1 + \beta q \varepsilon^{q - 2}}$ or $\varepsilon$.

	\item \textbf{Case 2:} $x_i \leq \dfrac{3}{4} \beta^{2/3}$. The derivative of $\dfrac{1}{2\beta}(y - x_i)^2$ no longer intersects with the derivative of $|y|^q$, implying that $T_{x_i}$ is monotonic on both sides of the origin. Thus, $R_{x_i}$ is monotonic when $y \geq \varepsilon$, and the minimum is attained at $\dfrac{x_i}{1 + \beta q \varepsilon^{q - 2}}$ or $\varepsilon$.
\end{itemize}

As a result, the proximal operator of $r^q_{\varepsilon,i}(x_i)$ can be expressed as:
\begin{equation}\label{eq:proxq}
	\left(\operatorname{prox}_{\beta r^q_{\varepsilon,i}}(x)\right)_i = \left\{\begin{array}{ll}
		\underset{y\in\left\{f_{d}(x_i),\;\mathrm{sgn}(x_i)\min\{|\frac{x_i}{1+\beta q \varepsilon^{q-2}}|,|\varepsilon|\}\right\}}{\arg\min} R_{x_i}(y), & |x_i| > \frac{3}{4}\beta^{(2/3)},\\
		\mathrm{sgn}(x_i)\min\{|\frac{x_i}{1+\beta q \varepsilon^{q-2}}|,|\varepsilon|\}, & |x_i| \leq \frac{3}{4}\beta^{(2/3)}.
	\end{array}\right.
\end{equation}
When $|x_i| > \dfrac{3}{4} \beta^{2/3}$, we evaluate both candidate values of $y$ and choose the $y$ that minimizes $R_{x_i}(y)$.

Our model can now be rewritten as:
\begin{equation}\label{eq:cs2}
	\min_{x}\left\{ r^q_{\varepsilon}(x) + \frac{1}{2\delta}\|y - v\|_{2}^2 \ \bigg|\ Mx = y \right\}.
\end{equation}

The model in equation \eqref{eq:cs2} can be expressed in the form of \eqref{DRSO} with the following specifications:
\[
	x_1 = x,\quad x_2 = y;\quad
	f_1(x_1) = r^q_{\varepsilon}(x),\quad f_2(x_2) = \dfrac{1}{2\delta}\|x_2 - v\|_{2}^2;\quad
	A = \begin{bmatrix} M & -I \end{bmatrix},\quad b = 0.
\]
There are no abstract constraints, so $\mathcal{X} = \mathbb{R}^{m+n}$.
Therefore, we can calculate the error functions in equation \eqref{eq:smalle} as follows:
\begin{equation*}
	\begin{aligned}
		e_\lambda^k &= \beta (Mx^k - y^k), \\
		\bar{e}_x^k &= \beta \left( \xi_x^k - M^\top (\lambda^k - e_\lambda^k) \right), \\
		\bar{e}_y^k &= \beta \left( \xi_y^k + (\lambda^k - e_\lambda^k) \right).
	\end{aligned}
\end{equation*}

With these specifications, the subproblems can be summarized as follows:
\begin{itemize}
	\item \textbf{The $x_1$-subproblem ($x$-subproblem):}
		\begin{equation*}
			\begin{aligned}
			x^{k+1}
			&= \underset{x \in \mathbb{R}^{n}}{\operatorname{argmin}}\left\{ r^q_{\varepsilon}(x) + \frac{1}{2\beta} \left\| x - \left( x^k + \beta \xi_x^k - \rho_k \bar{e}_{x}^{k} \right) \right\|_{2}^{2} \right\} \\
			&= \operatorname{prox}_{\beta r^q_{\varepsilon}}\left( x^{k} + \beta \xi_x^k - \rho_{k} \bar{e}_{x}^{k} \right).
			\end{aligned}
		\end{equation*}
	\item \textbf{The $x_2$-subproblem ($y$-subproblem):}
		\begin{equation*}
			\begin{aligned}
			y^{k+1}
			&= \underset{y \in \mathbb{R}^{m}}{\operatorname{argmin}}\left\{ \frac{1}{2\delta}\|y - v\|_{2}^{2} + \frac{1}{2\beta} \left\| y - \left( y^{k} + \beta \xi_y^k - \rho_{k} \bar{e}_{y}^{k} \right) \right\|_{2}^{2} \right\} \\
			&= \frac{\delta \left( y^{k} + \beta \xi_y^k - \rho_{k} \bar{e}_{y}^{k} \right) + \beta v}{\delta + \beta}.
			\end{aligned}
		\end{equation*}
	\item \textbf{The $ \lambda $-update:}
		\begin{equation*}
			\lambda^{k+1} = \lambda^k - \rho_{k} \left( e_\lambda^k - \beta \left( M \bar{e}_x^k - \bar{e}_y^k \right) \right).
		\end{equation*}
\end{itemize}

\begin{table}[htbp]
	\centering
	\caption{Performance comparison between DDRSM and ADMM under nonconvex settings}
	\begin{tabular*}{\textwidth}{@{\extracolsep{\fill}}lcccccc}
		\toprule
		& \multicolumn{2}{c}{\textbf{Problem Setting}} & \multicolumn{4}{c}{\textbf{Performance Metrics}} \\
		\cmidrule(lr){2-3} \cmidrule(lr){4-7}
		\textbf{Algorithm} & $(m,n)$ & Sparsity & PSNR & CPU Time & Iterations \\
		\midrule
		DDRSM & \multirow{2}{*}{$(1500,1000)$} & \multirow{2}{*}{0.02} & \textbf{62.79} & \textbf{0.4178s} & \textbf{23} \\
		ADMM & & & 62.78 & 0.6105s & 57 \\
		\midrule
		DDRSM & \multirow{2}{*}{$(3000,1000)$} & \multirow{2}{*}{0.02} & \textbf{63.46} & \textbf{0.7983s} & \textbf{23} \\
		ADMM & & & 63.46 & 0.9376s & 52 \\
		\midrule
		DDRSM & \multirow{2}{*}{$(1500,1000)$} & \multirow{2}{*}{0.06} & 67.53 & \textbf{0.4330s} & \textbf{25} \\
		ADMM & & & \textbf{67.63} & 0.5607s & 60 \\
		\midrule
		DDRSM & \multirow{2}{*}{$(1500,1000)$} & \multirow{2}{*}{0.12} & \textbf{62.29} & 0.6892s & \textbf{40} \\
		ADMM & & & 62.28 & \textbf{0.6121s} & 63 \\
		\bottomrule
	\end{tabular*}
	\label{tab:1}
\end{table}

\begin{figure}[htbp]
	\centering
	\begin{subfigure}[b]{0.45\textwidth}
		\centering
		\includegraphics[height=2in]{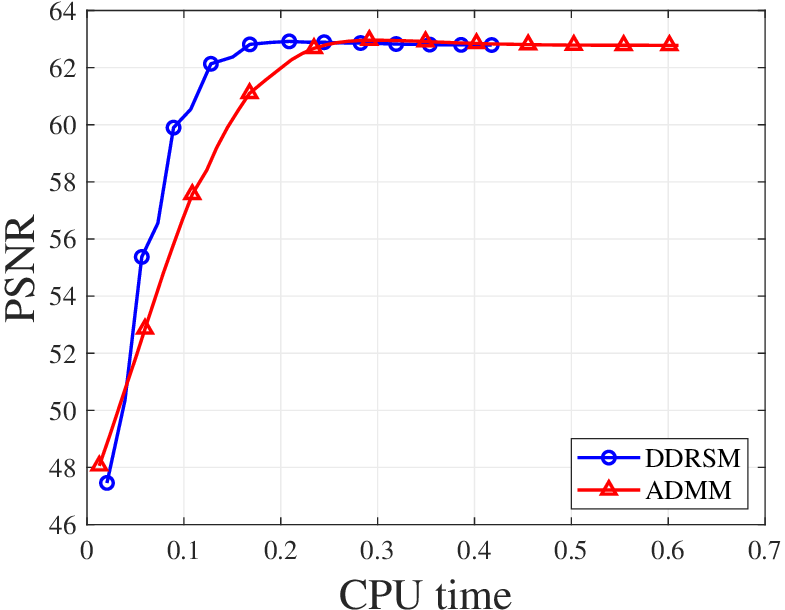}
		\caption{$m=1500, n=1000, \text{sparsity}=0.02$}
	\end{subfigure}
	\hfil
	\begin{subfigure}[b]{0.45\textwidth}
		\centering
		\includegraphics[height=2in]{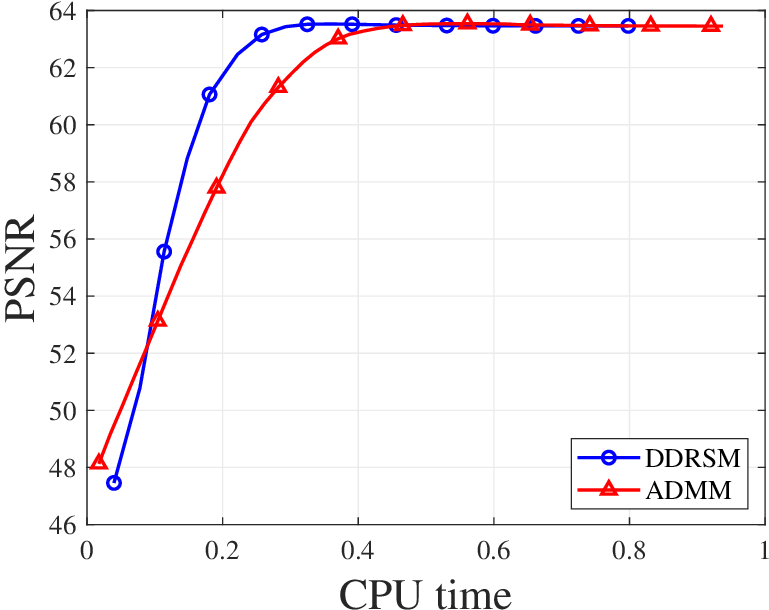}
		\caption{$m=3000, n=1000, \text{sparsity}=0.02$}
	\end{subfigure}
	
	\begin{subfigure}[b]{0.45\textwidth}
		\centering
		\includegraphics[height=2in]{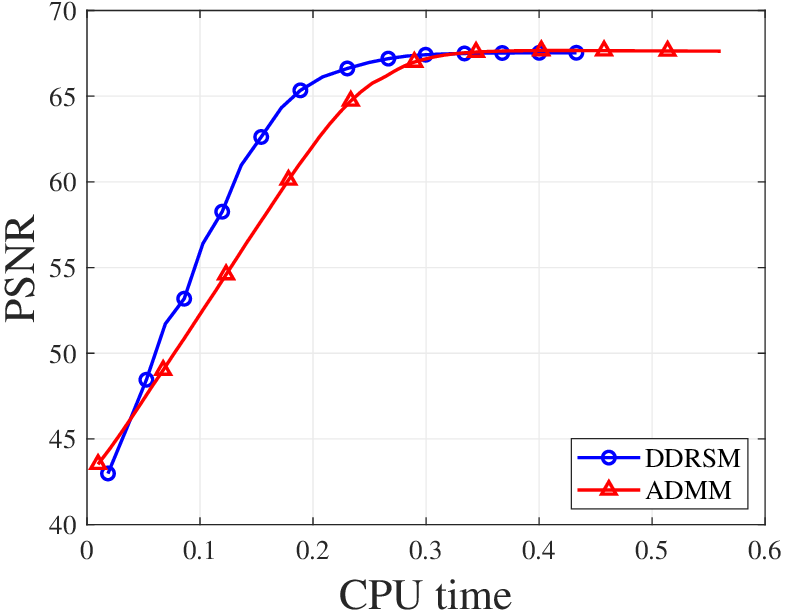}
		\caption{$m=1500, n=1000, \text{sparsity}=0.06$}
	\end{subfigure}
	\hfil
	\begin{subfigure}[b]{0.45\textwidth}
		\centering
		\includegraphics[height=2in]{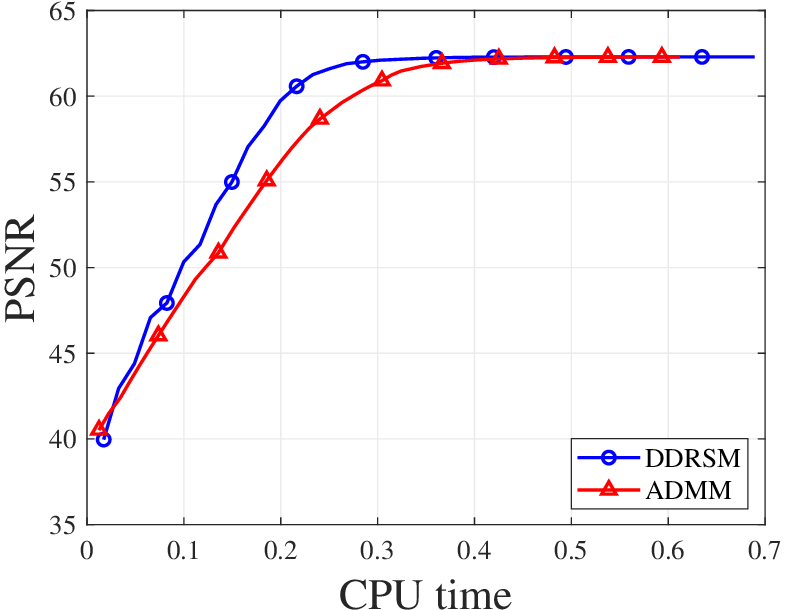}
		\caption{$m=1500, n=1000, \text{sparsity}=0.12$}\label{fig:psnr_d}
	\end{subfigure}
	\caption{PSNR by CPU-time for different sizes of problems}\label{PSNR by CPUtime}
\end{figure}

In summary, the compressed sensing process involves the following steps. Firstly, we generate a sparse signal $x$ with components scaled to the range $[0,1]$. We then transform the signal using the discrete Fourier transform (DFT). The transformed signal is multiplied by the measurement matrix $M$ and perturbed by adding Gaussian random noise with a variance of $0.01$, resulting in the observed data $v$. Secondly, the sparse signal is reconstructed from $M$ and $v$ using the DDRSM method with the nonconvex regularizer ($q=\frac{1}{2}$). This completes the compressed sensing process.

For our experiments, we followed a specific procedure. We generated a sparse vector $x^G \in \mathbb{R}^n$, representing the original signal we aim to recover. We applied the inverse Fourier transform to $ x^G $ to obtain the signal in the time domain. Gaussian random noise was intentionally added to the measurements to introduce a certain level of distortion, resulting in the observed data $ v $. We utilized the peak signal-to-noise ratio (PSNR) metric to assess the quality of the reconstructed signals by comparing the reconstructed signal $ x^* $ obtained from the algorithm with the ground truth $ x^G $. The PSNR values were calculated using the MATLAB function \texttt{psnr}, which compares the reconstructed signals to the original noise-free signals. The PSNR is calculated as follows:
\[
\mathrm{PSNR}=10 \cdot \log _{10}\left(\frac{\text{peakval}^2}{\mathrm{MSE}}\right),
\]
where $ \text{peakval} $ is the maximum possible value of the signal (in this case, set as $ \max_{1 \leq i \leq n} | x_i^G | $), and $ \mathrm{MSE} $ (mean squared error) denotes the average squared difference between corresponding samples of the reconstructed and original signals. That is, for $ x^* $ being the reconstructed signal obtained by the algorithm and $ x^G $ being the ground truth, $ \mathrm{MSE} = \dfrac{1}{n} \| x^* - x^G \|_2^2 $. Higher PSNR values indicate a higher fidelity of the reconstructed signals to the originals.

In the parameter selection process, we adopted a systematic method that involved both a wide-range initial search and a subsequent refined random search to identify optimal hyperparameters for our algorithms. For instance, taking $ \beta $ as an example, we iterated through a range of values (e.g., $ [0.2, 0.6] $ with a step size of $ 0.05 $) while varying $ \delta $ and $ \rho $ within respective ranges. By evaluating the performance metrics (PSNR and CPU time) for each combination, we pinpointed the set of parameters that yielded the highest fidelity in signal reconstruction. We then conducted a refined random search within a smaller range around the optimal values to further fine-tune the hyperparameters. This parameter selection process ensured that our algorithms were optimized to achieve the best possible results within a feasible range of hyperparameters. For every algorithm and every problem size, we searched for the optimal hyperparameters in this manner.

Following our parameter selection methodology, we initialized both ADMM and DDRSM with zero vectors. ADMM terminates when $ \| u^k - u^{k+1} \| \leq \text{tol}_p $ and $ \| \lambda^k - \lambda^{k+1} \| \leq \text{tol}_d $. DDRSM adds an additional termination condition: $ \| E_\beta(x^k, \bar{\lambda}^k) \| \leq \text{tol}_E $, allowing for termination under any of these criteria. To determine the tolerance values, we conducted several runs on small-scale problems up to the maximum iteration number to ensure convergence. We then estimated the tolerance at convergence for both algorithms and derived an empirical relationship between this tolerance and the problem size, which guided our selection of tolerances for the numerical experiments.

Figure \ref{PSNR by CPUtime} displays the obtained PSNR values as a function of CPU time, offering insights into the performance of the algorithms. Specifically, we compare the results of DDRSM and ADMM under various problem sizes and sparsity levels using the LASSO model with \( \ell_{1/2} \) norm regularization. In the legend, "DDRSM" and "ADMM" denote the respective algorithms.

From Figure \ref{PSNR by CPUtime}, we observe that both DDRSM and ADMM converge to high PSNR levels. DDRSM exhibits a rapid convergence speed, achieving high PSNR values within a shorter time frame compared to ADMM. In contrast, ADMM requires more computational time to reach similar PSNR levels.

Table \ref{tab:1} provides detailed numerical results for the final PSNR, CPU time, and the number of iterations for each algorithm under different problem settings. In the table, the superior results are highlighted in bold, indicating where DDRSM outperforms ADMM.
From the table, it is evident that DDRSM generally achieves higher PSNR values, consumes less CPU time, and requires fewer iterations compared to ADMM. The bolded data clearly demonstrate DDRSM's advantages in terms of faster convergence and better reconstruction quality. Specifically, DDRSM consistently shows improved computational efficiency and solution accuracy over ADMM across various problem instances.

It is noteworthy that in the fourth problem setting, ADMM shows a slight advantage in CPU time over DDRSM, as indicated in Table \ref{tab:1}. However, a closer examination reveals that this does not reflect the overall efficiency of the algorithms. From the corresponding Figure \ref{fig:psnr_d}, we observe that DDRSM attains higher PSNR values more rapidly than ADMM. Specifically, DDRSM's PSNR curve ascends steeply, reaching a higher reconstruction quality in less time. The increased total CPU time for DDRSM is attributed to the prolonged termination phase, during which both algorithms maintain relatively stable PSNR levels without significant improvements. This indicates that while ADMM may terminate slightly earlier, DDRSM is more effective in the critical early stages of convergence, providing faster attainment of superior solutions.

Overall, the experimental results demonstrate that DDRSM is effective in solving sparse recovery problems using the LASSO model with \( \ell_{1/2} \) norm regularization, significantly outperforming ADMM in terms of convergence speed and computational efficiency.

\subsection{Robust alignment by sparse and low-rank decomposition}

The second numerical experiment involves a problem related to Robust Principal Component Analysis (RPCA).
The problem solved by Peng et al.\ \cite{peng2012rasl} is about simultaneously aligning a batch of linearly correlated images despite gross corruption. It is commonly used for batch alignment of face images in face recognition and for automatic alignment of landscape and scene images to facilitate subsequent computer vision processing. If we have two misaligned images represented by $I_1$ and $I_2$, which are defined as functions on a picture space $I_1,I_2:\Omega\subset\mathbb{R}^2\rightarrow[0,255]$, there exists an invertible transformation $\tau: \mathbb{R}^2 \rightarrow \mathbb{R}^2$ such that
$$
I_2(x, y)=\left(I_1 \circ \tau\right)(x, y) := I_1(\tau(x, y)).
$$
Here, we will slightly abuse the notation. 
We use $I(\cdot)$ to represent the image function and $I$ to denote the matrix composed of pixels of the corresponding image. That is, $I(\cdot):\Omega\subset \mathbb{R}^2\rightarrow[0,255]$, while $I\in\mathbb{R}^{w\times h}$, where $w$ and $h$ represent the width and height of the image. In addition, we use $I\circ\tau$ to denote the matrix corresponding to the image function $I(\tau(\cdot))$.

Assuming we have $n$ input images $I_1, I_2, \ldots, I_n$ of the same object, but misaligned with each other, we can find domain transformations $\tau_1, \tau_2, \ldots, \tau_n$ such that the transformed images $I_1 \circ \tau_1, I_2 \circ \tau_2, \ldots, I_n \circ \tau_n$ are well-aligned at the pixel level. This can be equivalently represented as finding a low-rank matrix
$$
D \circ \tau \doteq\left[\operatorname{vec}\left(I_1^0\right),\cdots, \operatorname{vec}\left(I_n^0\right)\right] \in \mathbb{R}^{m \times n},
$$
where $I_j^0 = I_j \circ \tau_j$ for $j = 1, 2, \ldots, n$, $D = \left[\operatorname{vec}\left(I_1\right),\cdots, \operatorname{vec}\left(I_n\right)\right]$, and $\tau$ represents the set of $n$ transformations $\tau_1, \tau_2, \ldots, \tau_n$. Therefore, the problem of batch image alignment can be formulated as the following optimization problem:
$$
\min _{A, \tau} \operatorname{rank}(A) \quad \text{s.t.} \quad D \circ \tau=A.
$$
We introduce a sparse regularization term in the objective function to address misalignment and potential sparse errors. This leads to the formulation of the robust alignment by sparse and low-rank decomposition problem (RASL):
\begin{equation}\label{RASL}
	\min _{A, E, \tau} \operatorname{rank}(A) + \lambda \|E\|_0 \quad \text{s.t.} \quad D \circ \tau=A+E,
\end{equation}
where $  E = \left[ \operatorname{vec}\left( e_1 \right),\ \ldots,\ \operatorname{vec}\left( e_n \right) \right]  $ and $  \lambda > 0  $ is a weighting parameter. Here, $  E  $ is a matrix, and we use $  \| \cdot \|_0  $ to denote the number of nonzero elements in $  E  $, corresponding to the $  \ell_0  $ norm of $  \operatorname{vec}(E)  $.

To handle the computational challenges posed by the nonconvex regularization terms $  \operatorname{rank}(\cdot)  $ and $  \| \cdot \|_0  $, a common approach is to use their convex relaxations, namely the nuclear norm $  \| \cdot \|_*  $ and the $  \ell_1  $ norm $  \| \cdot \|_1  $, respectively. However, to improve model performance and obtain solutions closer to reality, nonconvex regularizers can be introduced, such as the quasi-norms $  \| \cdot \|_{*, 1/2}  $ and $  \| \cdot \|_{1/2}  $. The specific definitions of these norms are as follows:
\begin{equation*}
    \begin{aligned}
        \| A \|_1 &:= \sum_{i=1}^m \sum_{j=1}^n | a_{ij} |, \quad &\| A \|_* &:= \sum_{i=1}^{r} \sigma_i(A), \\[1ex]
        \| A \|_{1/2} &:= \left( \sum_{i=1}^m \sum_{j=1}^n \sqrt{ | a_{ij} | } \right)^2, \quad &\| A \|_{*,1/2} &:= \left( \sum_{i=1}^{r} \sqrt{ \sigma_i(A) } \right)^2,
    \end{aligned}
\end{equation*}
where $  a_{ij}  $ are the elements of $  A  $, $  \sigma_i(A)  $ are the singular values of $  A  $, and $  r = \operatorname{rank}(A)  $.

Based on the relaxation of the regularization terms, we have the convex version of RASL:
\begin{equation}\label{RASLconvex}
    \min_{A, E, \tau} \| A \|_* + \lambda \| E \|_1 \quad \text{s.t.} \quad D \circ \tau = A + E,
\end{equation}
and the nonconvex version of RASL:
\begin{equation}\label{RASLnonconvex}
    \min_{A, E, \tau} \| A \|_{*, 1/2} + \lambda \| E \|_{1/2} \quad \text{s.t.} \quad D \circ \tau = A + E.
\end{equation}
The $  \ell_{1/2}  $ quasi-norms here utilize the quadratic smoothing technique mentioned earlier in equation \eqref{eq:smoothing}, and the subproblems involving these quasi-norms can be solved using equation \eqref{eq:proxq}.

In \cite{peng2012rasl}, an outer iteration addresses the nonlinearity of $  \tau  $. Each $  \tau_i  $ is treated as a nonlinear function with $  p  $ parameters, and $  \tau  $ is differentiable with respect to these parameters. For convenience, we slightly abuse notation and denote $  \tau_i  $ as both the transformation and its parameter vector. Therefore, we have $  \tau = [\tau_1,\ \dots,\ \tau_n ] \in \mathbb{R}^{p \times n}  $.

The outer iteration of the algorithm computes the Jacobian matrix of the objective function with respect to the parameters $  \tau  $, resulting in a linearized problem. The outer iteration is described in the following steps:

\begin{itemize}
    \item \textbf{Step 1:} Compute Jacobian matrices for the transformations:
    \[
    J_i \leftarrow \left. \frac{\partial}{\partial \zeta} \left( \frac{ \operatorname{vec}\left( I_i \circ \zeta \right) }{ \left\| \operatorname{vec}\left( I_i \circ \zeta \right) \right\|_2 } \right) \right|_{ \zeta = \tau_i }, \quad i = 1, 2, \ldots, n;
    \]
    \item \textbf{Step 2:} Warp and normalize the images:
    \[
    D \circ \tau \leftarrow \left[ \frac{ \operatorname{vec}\left( I_1 \circ \tau_1 \right) }{ \left\| \operatorname{vec}\left( I_1 \circ \tau_1 \right) \right\|_2 },\ \ldots,\ \frac{ \operatorname{vec}\left( I_n \circ \tau_n \right) }{ \left\| \operatorname{vec}\left( I_n \circ \tau_n \right) \right\|_2 } \right];
    \]
    \item \textbf{Step 3 (Inner Loop):} Solve problem \eqref{RASLconvex_inner} or \eqref{RASLnonconvex_inner} to obtain $  \left( A^*, E^*, \Delta \tau^* \right)  $;
    \item \textbf{Step 4:} Update transformations: $  \tau \leftarrow \tau + \Delta \tau^*  $.
\end{itemize}

The linearized problem solved in the inner iteration can be viewed as a line search in a gradient descent method. The convex version of the linearized problem is:
\begin{equation}\label{RASLconvex_inner}
    \min_{A, E, \Delta \tau} \| A \|_* + \lambda \| E \|_1 \quad \text{s.t.} \quad D \circ \tau + \sum_{i=1}^n J_i \Delta \tau_i e_i^\top = A + E,
\end{equation}
and the nonconvex version is:
\begin{equation}\label{RASLnonconvex_inner}
    \min_{A, E, \Delta \tau} \| A \|_{*, 1/2} + \lambda \| E \|_{1/2} \quad \text{s.t.} \quad D \circ \tau + \sum_{i=1}^n J_i \Delta \tau_i e_i^\top = A + E,
\end{equation}
where $  J_i  $ is the Jacobian of the $  i  $-th image with respect to the transformation $  \tau_i  $, and $  e_i  $ is the $  i  $-th standard basis vector in $  \mathbb{R}^n  $. In equations \eqref{RASLconvex_inner} and \eqref{RASLnonconvex_inner}, $  D \circ \tau  $ becomes a constant matrix, and $  \Delta \tau  $ is the variable to be solved for.

To solve the inner iterative problem \eqref{RASLconvex_inner}, Peng et al.\ \cite{peng2012rasl} employed the inexact augmented Lagrange multiplier method (iALM), a commonly used algorithm for solving this type of optimization problem. In our work, we apply the DDRSM to solve the inner iterative problem. We compare the two algorithms separately for both convex and nonconvex cases.

\begin{figure}[htbp]
	\centering
	\begin{minipage}{0.4\textwidth}
		\centering
		\includegraphics[width=\textwidth]{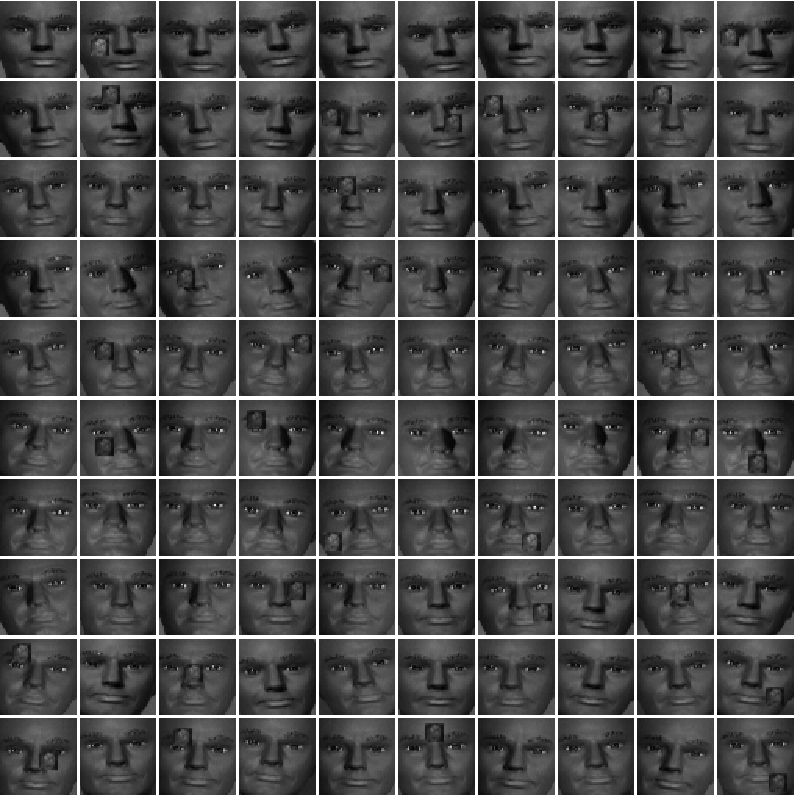}
		\subcaption{dummy faces}
		\label{fig:dummy}
	\end{minipage}
	\hspace{3em}
	\begin{minipage}{0.4\textwidth}
		\centering
		\includegraphics[width=\textwidth]{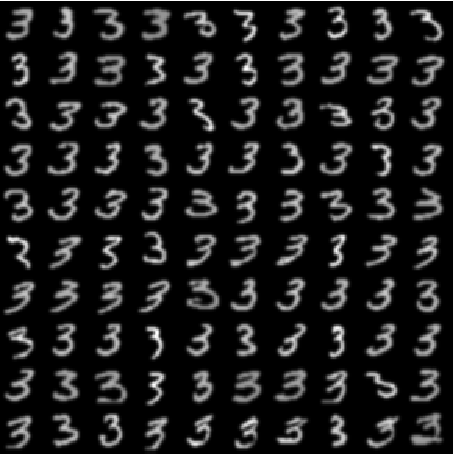}
		\subcaption{digit 3}
		\label{fig:digit}
	\end{minipage}
	\caption{Input images}
	\label{InputImages}
\end{figure}

\vspace{0.1em}

\begin{figure}[htbp]
	\centering
	\begin{minipage}[t]{0.4\textwidth}
		\centering
		\includegraphics[trim=0pt 0pt 0pt 0pt, clip, width=\textwidth]{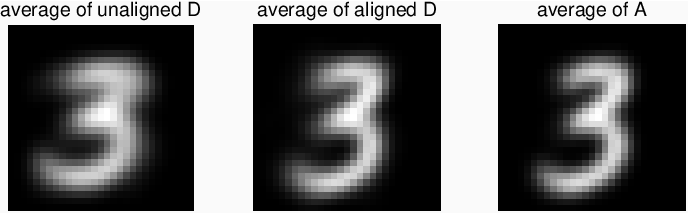}
		\subcaption{convex model by iALM}
	\end{minipage}
	\hfil
	\begin{minipage}[t]{0.4\textwidth}
		\centering
		\includegraphics[trim=0pt 0pt 0pt 0pt, clip, width=\textwidth]{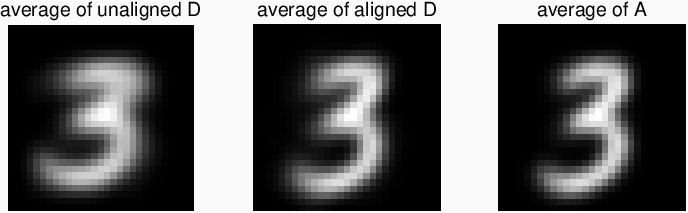}
		\subcaption{convex model by DDRSM}
	\end{minipage}\\
	\vspace{1em}
	\begin{minipage}[t]{0.4\textwidth}
		\centering
		\includegraphics[trim=0pt 0pt 0pt 0pt, clip, width=\textwidth]{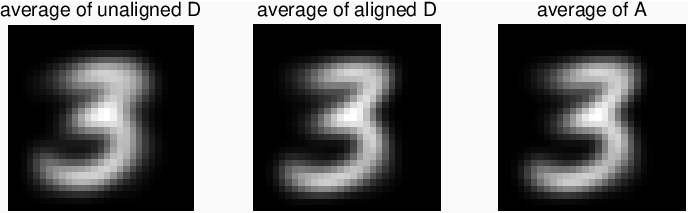}
		\subcaption{nonconvex model by iALM}
	\end{minipage}
	\hfil
	\begin{minipage}[t]{0.4\textwidth}
		\centering
		\includegraphics[trim=0pt 0pt 0pt 0pt, clip, width=\textwidth]{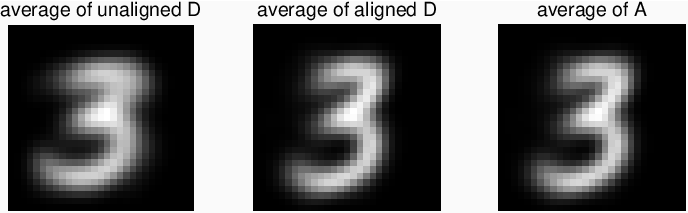}
		\subcaption{nonconvex model by DDRSM}
	\end{minipage}\\
	\vspace{1em}
	\caption{Digit 3 averaged image for both models by both algorithms}
	\label{averaged images}
\end{figure}

\begin{figure}[htbp]
	\centering
	\begin{minipage}[t]{0.3\linewidth}
		\centering
		\setlength\tabcolsep{3pt}
		\includegraphics[width=\linewidth]{ialm_c_digit_1.eps}
		\subcaption{Input images}
	\end{minipage}
	\hfill
	\begin{minipage}[t]{0.3\linewidth}
		\centering
		\setlength\tabcolsep{3pt}
		\includegraphics[width=\linewidth]{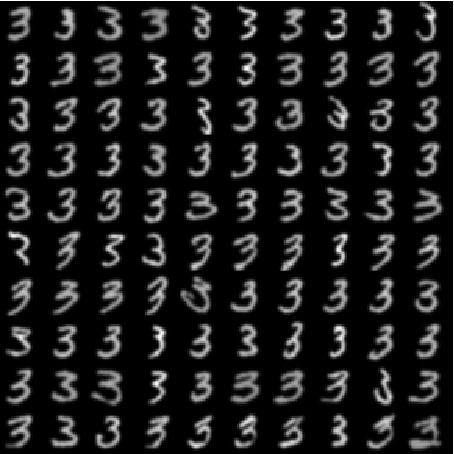}
		\subcaption{aligned images by iALM}
	\end{minipage}
	\hfill
	\begin{minipage}[t]{0.3\linewidth}
		\centering
		\setlength\tabcolsep{3pt}
		\includegraphics[width=\linewidth]{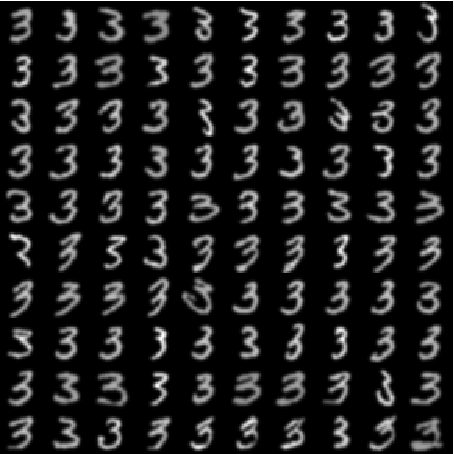}
		\subcaption{aligned images by DDRSM}
	\end{minipage}
	\centering
	\caption{Aligned digit 3 images under convex model}\label{aligned images digit 3 convex}
\end{figure}

\begin{figure}[htbp]
	\centering
	\begin{minipage}[t]{0.3\linewidth}
		\centering
		\setlength\tabcolsep{3pt}
		\includegraphics[width=\linewidth]{ialm_c_digit_1.eps}
		\subcaption{Input images}
	\end{minipage}
	\hfill
	\begin{minipage}[t]{0.3\linewidth}
		\centering
		\setlength\tabcolsep{3pt}
		\includegraphics[width=\linewidth]{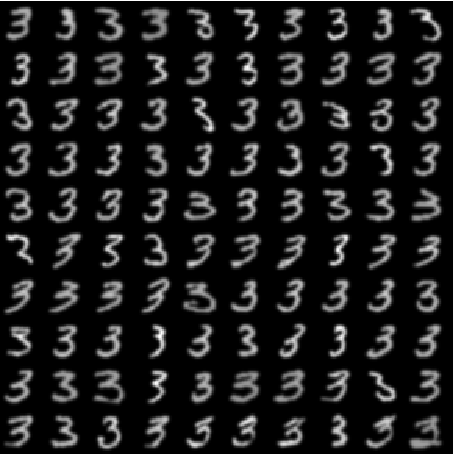}
		\subcaption{aligned images by iALM}
	\end{minipage}
	\hfill
	\begin{minipage}[t]{0.3\linewidth}
		\centering
		\setlength\tabcolsep{3pt}
		\includegraphics[width=\linewidth]{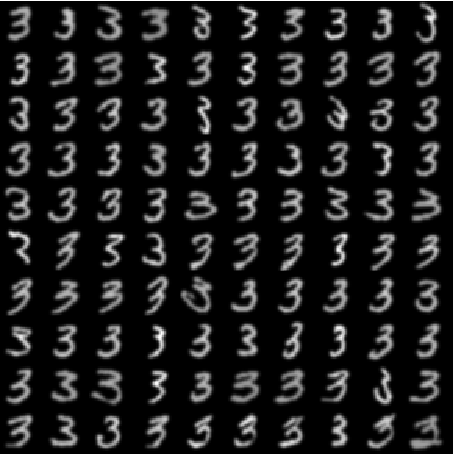}
		\subcaption{aligned images by DDRSM}
	\end{minipage}
	\centering
	\caption{Aligned digit 3 images under nonconvex model}\label{aligned images digit 3}
\end{figure}

\begin{figure}
	\centering
	\begin{minipage}{0.45\textwidth}
		\centering
		\setlength\tabcolsep{3pt}
		\includegraphics[width=\linewidth]{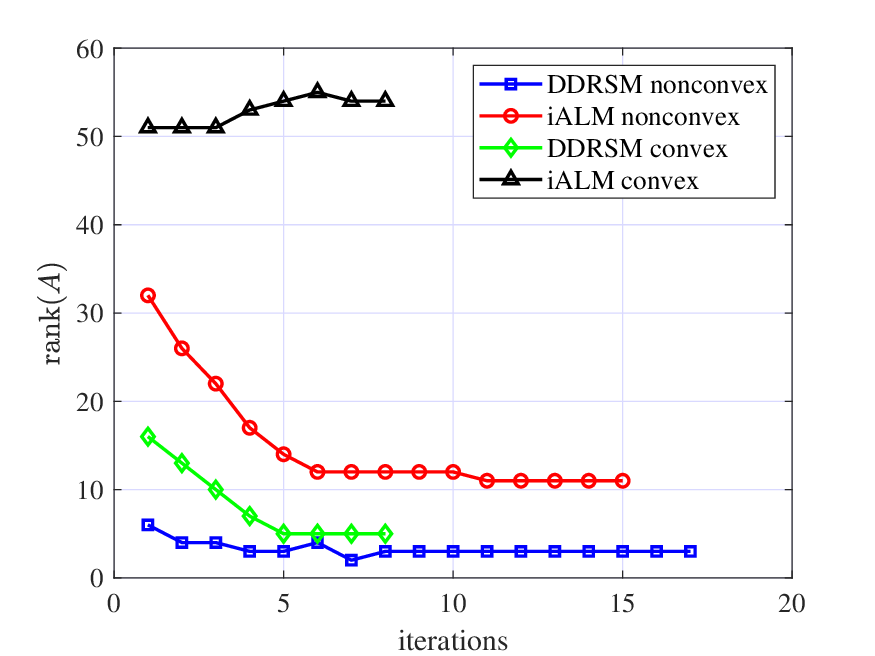}
		\subcaption{rank of $A$ by outer iterations}\label{dummy face chart rank}
	\end{minipage}
	\hfill
	\begin{minipage}{0.45\textwidth}
		\centering
		\setlength\tabcolsep{3pt}
		\includegraphics[width=\linewidth]{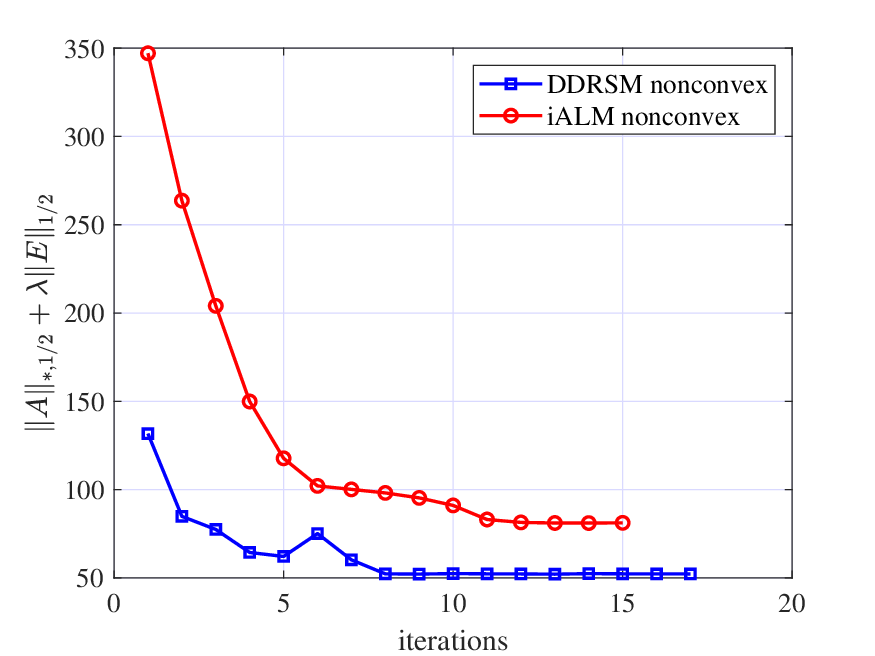}
		\subcaption{objective function for nonconvex case by outer iterations}\label{dummy face chart obj}
	\end{minipage}
	\caption{Charts for dummy face example}\label{dummy face chart}
\end{figure}

Our numerical experiments used two examples: the handwritten digit ``3'' and the dummy face, as shown in Figure~\ref{InputImages}. The input images for these examples were not preprocessed. The example with the digit ``3'' was mainly used to demonstrate the intuitive performance of the algorithm. The average images obtained by the algorithms are shown in Figure~\ref{averaged images}. Under the convex model, both algorithms yield satisfactory results. However, in the nonconvex case, as shown in Figure~\ref{aligned images digit 3}, the iALM algorithm faces difficulties in solving the nonconvex model, as it cannot align the input images effectively. In contrast, the DDRSM algorithm successfully achieves the desired outcome.

Turning our attention to Figure~\ref{dummy face chart}, it includes two subplots presenting a comparative analysis of CPU times for the DDRSM and iALM algorithms in both convex and nonconvex scenarios. The legend in the figure denotes ``DDRSM nonconvex'' for DDRSM on the nonconvex model, ``DDRSM convex'' for DDRSM on the convex model, ``iALM nonconvex'' for iALM on the nonconvex model, and ``iALM convex'' for iALM on the convex model.

As shown in Figures~\ref{aligned images dummy face 1} and \ref{aligned images dummy face 2}, both algorithms correctly align the images for convex and nonconvex models in the dummy face example. However, Figure~\ref{dummy face chart} reveals that the solution obtained for the nonconvex model has a lower overall rank for matrix $A$ than the convex model. Additionally, the DDRSM algorithm consistently achieves a low-rank matrix $A$ for both convex and nonconvex models. 
It is particularly effective in obtaining a low-rank $A$ in the early iterations for the nonconvex model.

For a more detailed comparison in the nonconvex scenario, Figure~\ref{dummy face chart obj} demonstrates that the DDRSM algorithm achieves a faster decrease in the objective function than iALM. As a result, it obtains a solution with a smaller objective function value.

In conclusion, these numerical experiments suggest that DDRSM not only effectively solves the robust alignment problem in both convex and nonconvex models but also outperforms iALM, especially in the nonconvex case. DDRSM demonstrates superior convergence speed and solution quality, highlighting its potential for practical applications in image alignment tasks involving significant corruption and misalignment.

\begin{figure}[htbp]
	\centering
	\begin{minipage}{0.45\textwidth}
		\centering
		\setlength\tabcolsep{3pt}
		\includegraphics[width=\linewidth]{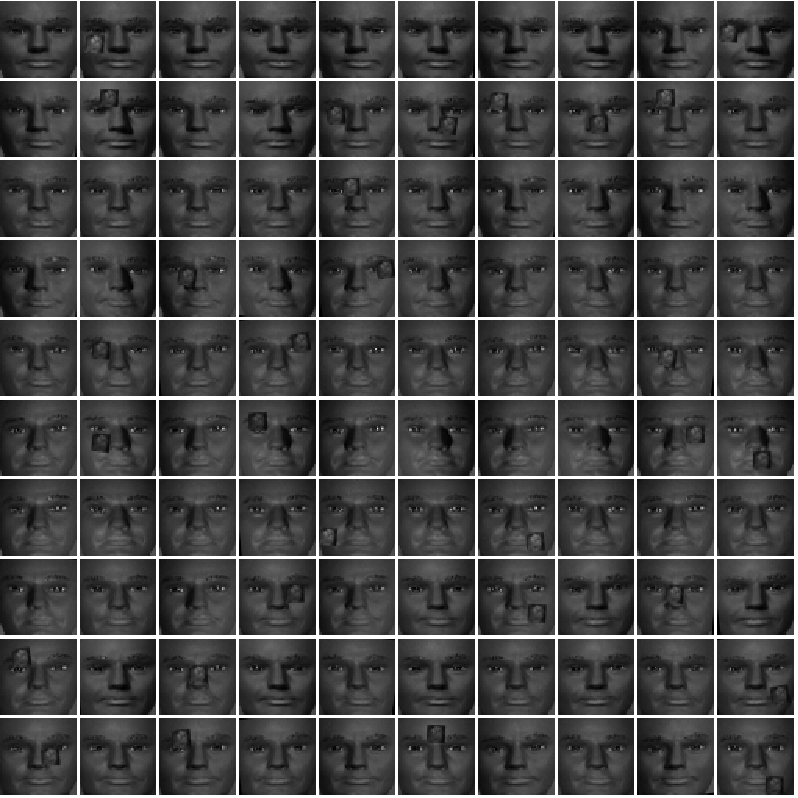}
		\subcaption{aligned images by iALM}
	\end{minipage}
	\hfill
	\begin{minipage}{0.45\textwidth}
		\centering
		\setlength\tabcolsep{3pt}
		\includegraphics[width=\linewidth]{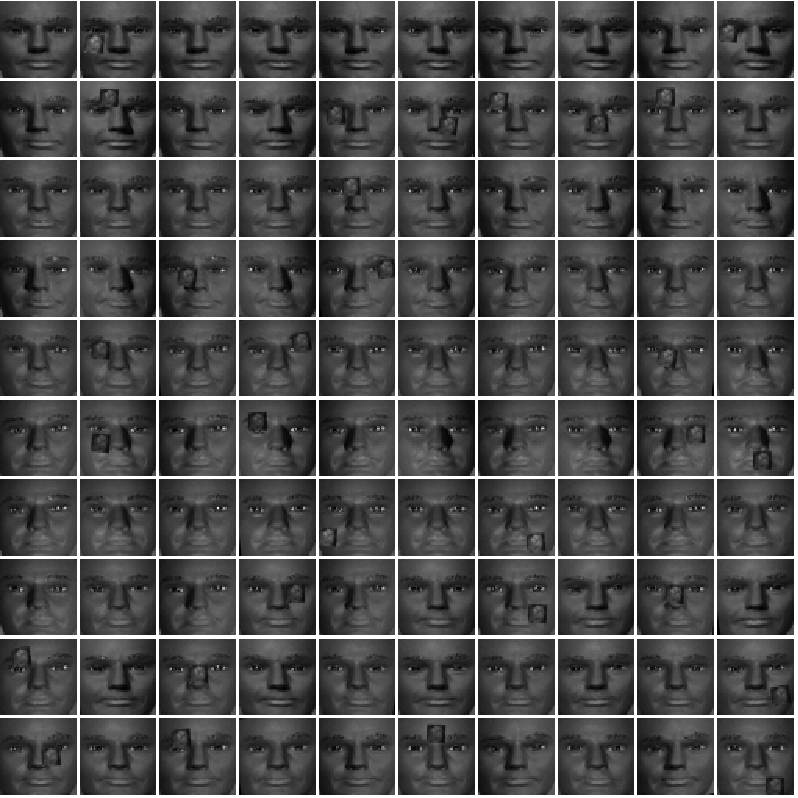}
		\subcaption{aligned images by DDRSM}
	\end{minipage}
	\centering
	\caption{Aligned images under convex model}\label{aligned images dummy face 1}
\end{figure}

\begin{figure}[htbp]
	\centering
	\begin{minipage}{0.45\textwidth}
		\centering
		\setlength\tabcolsep{3pt}
		\includegraphics[width=\linewidth]{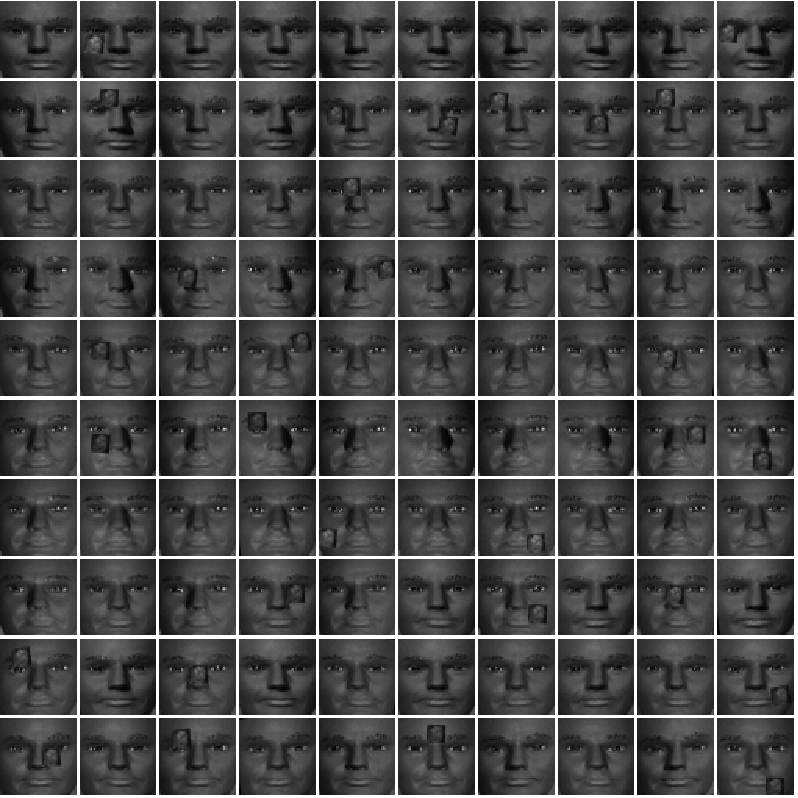}
		\subcaption{aligned images by iALM}
	\end{minipage}
	\hfill
	\begin{minipage}{0.45\textwidth}
		\centering
		\setlength\tabcolsep{3pt}
		\includegraphics[width=\linewidth]{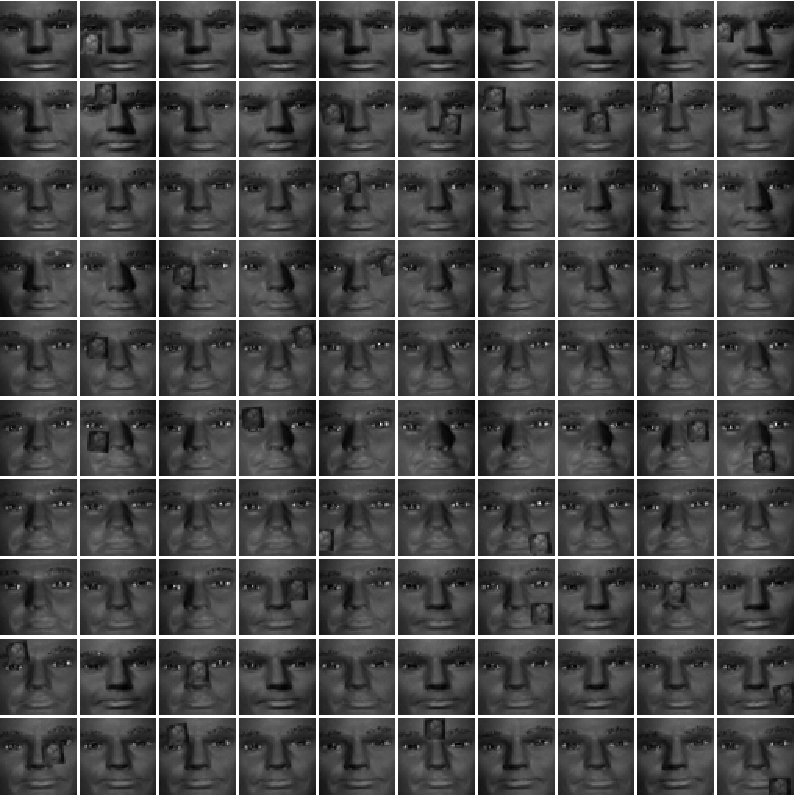}
		\subcaption{aligned images by DDRSM}
	\end{minipage}
	\centering
	\caption{Aligned images under nonconvex model}\label{aligned images dummy face 2}
\end{figure}

\begin{figure}[htbp]
	\centering
	\begin{minipage}{0.45\textwidth}
		\centering
		\setlength\tabcolsep{3pt}
		\includegraphics[width=\linewidth]{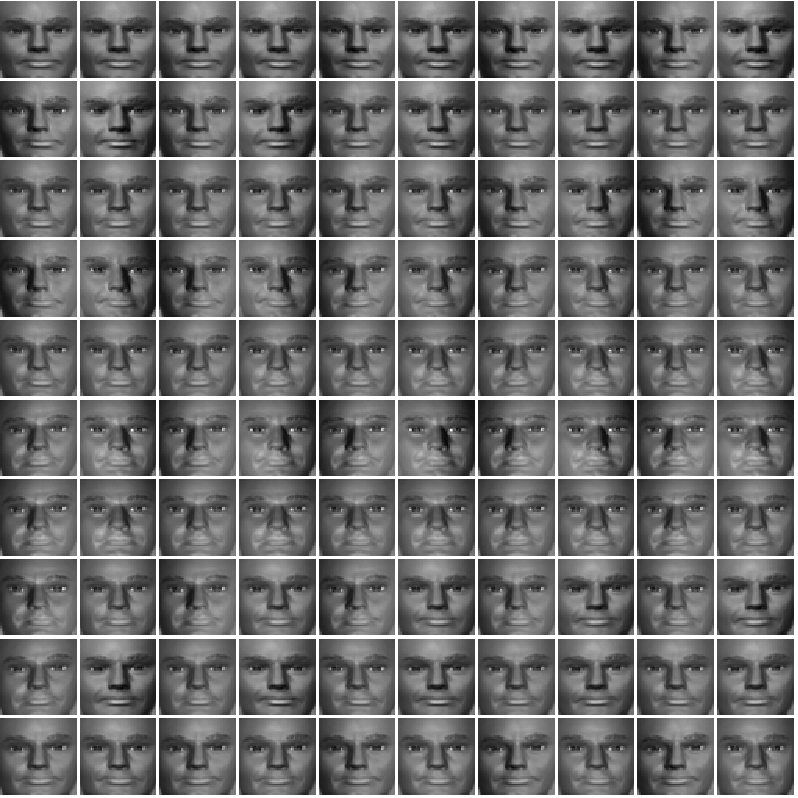}
		\subcaption{adjusted images by iALM}
	\end{minipage}
	\hfill
	\begin{minipage}{0.45\textwidth}
		\centering
		\setlength\tabcolsep{3pt}
		\includegraphics[width=\linewidth]{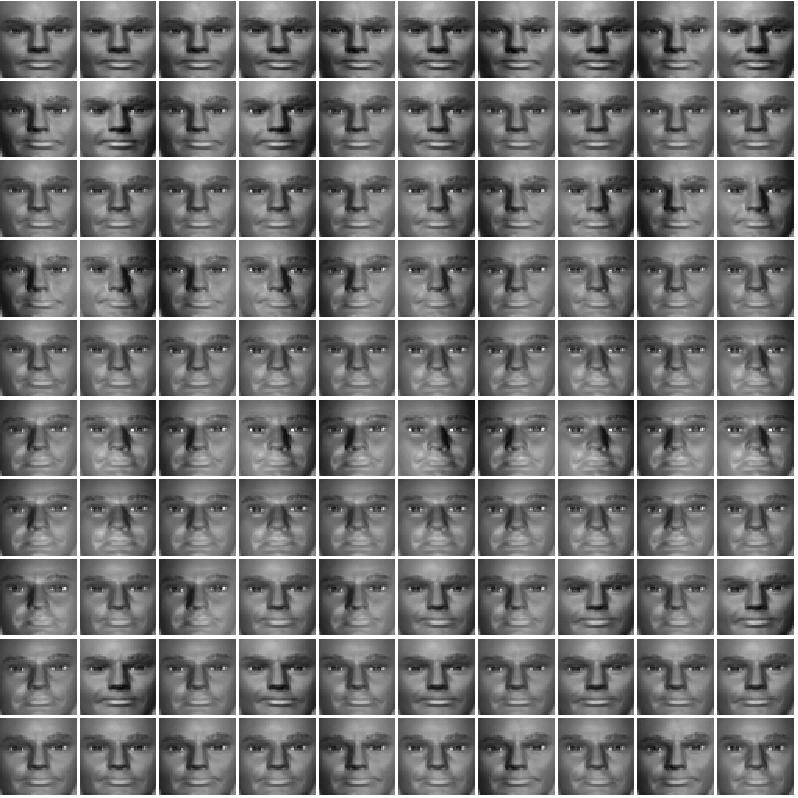}
		\subcaption{adjusted images by DDRSM}
	\end{minipage}
	\centering
	\caption{Aligned images adjusted by sparse error under convex model}\label{aligned images adjusted by sparse error dummy face convex}
\end{figure}

\begin{figure}[htbp]
	\centering
	\begin{minipage}{0.45\textwidth}
		\centering
		\setlength\tabcolsep{3pt}
		\includegraphics[width=\linewidth]{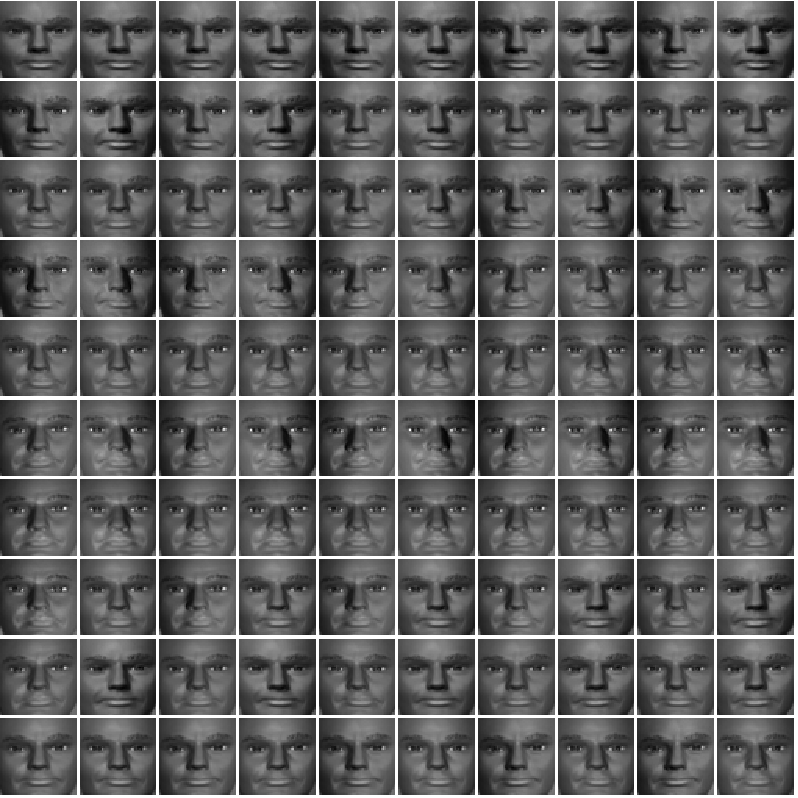}
		\subcaption{aligned images by iALM}
	\end{minipage}
	\hfill
	\begin{minipage}{0.45\textwidth}
		\centering
		\setlength\tabcolsep{3pt}
		\includegraphics[width=\linewidth]{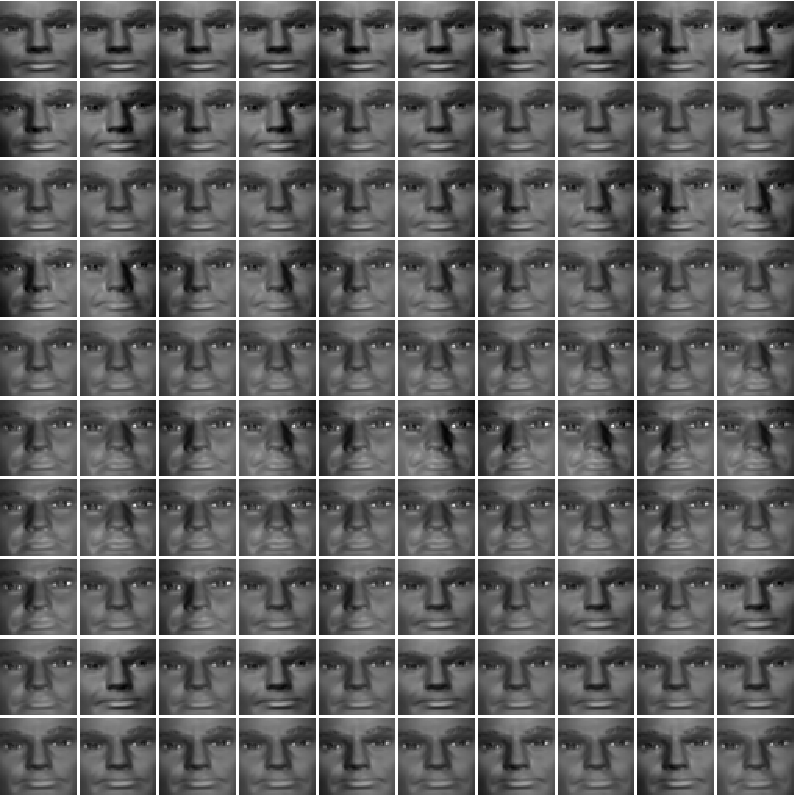}
		\subcaption{aligned images by DDRSM}
	\end{minipage}
	\caption{Aligned images adjusted by sparse error under nonconvex model}\label{aligned images adjusted by sparse error dummy face nonconvex}
\end{figure}

\section{Conclusion}

In this paper, we presented a convergence analysis of the distributed Douglas-Rachford splitting method (DDRSM) for weakly convex problems. The proposed approach incorporates an error bound scheme for optimization methods and leverages variational inequality theory. 
We have shown that DDRSM is adaptable to both convex and nonconvex problems with minor adjustments while maintaining convergence properties.
To validate our theoretical findings, we implemented the algorithm for the LASSO model in signal denoising and for robust alignment using sparse and low-rank decomposition. The numerical results demonstrate the effectiveness of the algorithm in solving weakly convex versions of regularization problems involving nonconvex norms, such as the nuclear quasi-norm and the $ \ell_{1/2} $ quasi-norm.

\bibliography{DDRSM-nonconvex}

\end{document}